\title[Metric spaces quasi-isometric to trees]{Real valued functions and metric spaces quasi-isometric to trees.}
\date{}
\author[Á.~Martínez-Pérez]{Álvaro Martínez-Pérez}
\address{Departamento de Geometría y Topología\\ Universidad Complutense de Madrid\\ Madrid 28040  Spain}
\email{alvaro\_martinez@mat.ucm.es}
\thanks{Departamento de Geometría y Topología\\ Universidad Complutense de Madrid\\ Madrid 28040  Spain\\email: alvaro\_martinez@mat.ucm.es}
\newtheorem{definicion}{Definition}[section]
\newtheorem{nota}[definicion]{Remark}
\newtheorem{prop}[definicion]{Proposition}
\newtheorem{lema}[definicion]{Lemma}
\newtheorem{teorema}[definicion]{Theorem}
\newtheorem{cor}[definicion]{Corollary}
\newcommand{\co}{\ensuremath{\colon}} 
\newcommand{\bz}{\ensuremath{\mathbb{Z}}} 
\newcommand{\br}{\ensuremath{\mathbb{R}}} 
\begin{document}

\begin{abstract} We prove that if $X$ is a complete geodesic metric space with uniformly generated $H_1$ and $f\co X\to \br$ is metrically proper on the connected components and bornologous, then $X$ is quasi-isometric to a tree.

Using this and adapting the definition of hyperbolic approximation we obtain an intrinsic sufficent condition for a metric space to be PQ-symmetric to an ultrametric space.
\end{abstract}

\maketitle

\tableofcontents

\begin{footnotesize}
Keywords:  Quasi-isometry, tree, hyperbolic approximation, PQ-symmetric.
\end{footnotesize}

\begin{footnotesize}
MSC:  Primary: 54E35; 53C23. Secondary: 20F65.
\end{footnotesize}

\section{Introduction}

A geodesic metric space $X$ is a path-connected metric space in which any two points $x,y$ are connected by an isometric image of an interval in the real line, called a geodesic and denoted by $[x,y]$. A geodesic metric space $X$ is called Gromov hyperbolic if there exists some $\delta\geq 0$ such that for any geodesic triangle $[xy]\cup[yz]\cup [zx]$ in $X$ each side is contained in a $\delta$-neighborhood of the other two. Several equivalent definitions can be found in \cite{B-H}. By a tree, we refer to a 1-dimensional simply connected simplicial complex. This is an example of Gromov 0-hyperbolic space. All metric spaces in this paper are assumed to be unbounded.

A map between metric spaces, $f:(X,d_X)\to (Y,d_Y)$, is
said to be \emph{quasi-isometric} if there are constants $\lambda
\geq 1$ and $C>0$ such that $\forall x,x'\in X$,
$\frac{1}{\lambda}d_X(x,x') -C \leq d_Y(f(x),f(x'))\leq \lambda
d_X(x,x')+C$. If if there is a constant $D>0$ such that
$\forall y\in Y$, $d(y,f(X))\leq D$, then $f$ is a
\emph{quasi-isometry} and $X,Y$ are \emph{quasi-isometric}.

In the case $\lambda=1$, the map $f$ is called \emph{roughly isometric} and a \emph{rough isometry} respectively.

Note that the composition of quasi-isometries is also a quasi-isometry.

There are several results in the literature characterizing when a metric space, or a graph, is quasi-isometric to a tree. In section \ref{Section: carac} we compile some of them from the perspective of the asymtotic dimension of the space, see \cite{D1} and \cite{F-W}, or an intrinsic property on the geodesics, see \cite{Man}. One reason to be interested in this is that any quasi-action on a geodesic metric space $X$ is quasi-conjugate to an action on some connected graph quasi-isometric to $X$. The converse is also true. In particular, any quasi-action on a simplicial tree is quasi-conjugate to an isometric action on a quasi-tree and any isometric action on a quasi-tree is quasi-conjugate to a quasi-action on a simplicial tree. In the case of bounded valence bushy trees it is proved in \cite{MSW} that any cobounded quasi-action is quasi-conjugated to an action on another bounded valence bushy tree.

The first aim in this work is to provide a new sufficent condition 
for a space to be quasi-isometric to a tree. To do this, we were inspired by the 
work of Manning in \cite{Man} where he studies the geometry of pseudocharacters, this is, real valued functions on groups which are ``almost'' homeomorphisms. In his construction, he uses a pseudocharacter with some conditions on it to obtain a tree from the Cayley graph of a finitely presented group.

We consider a real valued function $f$ on a graph with uniformly generated $H_1$ and do a similar thing. Let us recall that $H_1$ is \emph{uniformly generated} if there is an $L > 0$ so that $H_1(X)$ is generated by loops of length at most $L$. We assume $f$ to be bornologous and introduce the definition of \emph{metrically proper on the connected components}, see section \ref{section 2} for definitions. Then we extend the result for complete geodesic spaces.

\begin{teorema}\label{Tma: intro} Let $X$ be a complete geodesic space with $H_1(X)$ uniformly generated. If there is a function $f\co X \to \br$ such that $f$ is bornologous and metrically proper on the connected components, then $X$ is quasi-isometric to a 
tree. 
\end{teorema}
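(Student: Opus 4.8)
The plan is to realise $X$ coarsely as the nerve of a well-chosen cover indexed by $f$, and to read off the tree structure from the interplay between the two hypotheses on $f$ and the uniform generation of $H_1(X)$. Throughout I use the two gauges packaged by the hypotheses: since $f$ is bornologous there is a nondecreasing $\rho$ with $|f(x)-f(y)|\le \rho(d_X(x,y))$, and since $f$ is metrically proper on the connected components there is a nondecreasing $\psi$ such that every connected component of $f^{-1}([a,b])$ has diameter at most $\psi(b-a)$. Let $L$ be the constant from the uniform generation of $H_1(X)$. Fix once and for all a real $R>\rho(L)$ and cover $\br$ by the overlapping intervals $U_n=\bigl((n-1)R,(n+1)R\bigr)$, $n\in\bz$; consecutive intervals overlap in a segment of length $R$ and no point lies in three of them.

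First I would build the graph $T$. Its vertices are the connected components of the sets $f^{-1}(U_n)$ (these are open and, $X$ being geodesic hence locally path connected, are path connected), and I join a component of $f^{-1}(U_n)$ to a component of $f^{-1}(U_{n+1})$ by an edge whenever they meet. Because no point of $X$ lies in three of the $f^{-1}(U_n)$, the nerve of this cover is at most $1$-dimensional, so $T$ is an honest graph, and it is connected because $X$ is. Next I would check that the canonical vertex map $\phi\co X\to V(T)$, sending $x$ to a component of $f^{-1}(U_n)$ containing it, is a quasi-isometry: the upper bound comes from subdividing a geodesic into unit steps and using $\rho$ to bound the number of levels and edges crossed, while the lower bound comes from the fact that an edge path of combinatorial length $k$ runs through $k$ components each of diameter at most $\psi(2R)$, so joins points at $X$-distance at most $k\,\psi(2R)$. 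Coarse surjectivity is immediate since every component is nonempty.

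The heart of the argument, and the step I expect to be the main obstacle, is to show that $T$ is a \emph{tree}, i.e. that $H_1(T)=0$. For this I would use the canonical continuous map $\Phi\co X\to |T|$ to the geometric realisation furnished by a partition of unity subordinate to the cover. The key point is that a loop $\alpha$ of length at most $L$ has $f$-oscillation at most $\rho(L)<R$, so $f(\alpha)$ is an interval of length $<R$ and therefore sits inside a single $U_n$; hence $\alpha$ lies in one component $A$ of $f^{-1}(U_n)$, and $\Phi(\alpha)$ lies in the closed star of the vertex $A$, which is contractible. Thus $\Phi_*$ kills every class represented by a short loop, and since $H_1(X)$ is generated by such loops, $\Phi_*\co H_1(X)\to H_1(|T|)$ is the zero map. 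On the other hand I would show $\Phi_*$ is onto: given an embedded circuit $\sigma=(A_0,\dots,A_m=A_0)$ in $T$, choosing points $p_i\in A_i\cap A_{i+1}$ and joining $p_{i-1}$ to $p_i$ inside the (path connected) component $A_i$ produces a loop $\gamma$ in $X$ with $\Phi_*[\gamma]=[\sigma]$. Combining, $[\sigma]=\Phi_*[\gamma]=0$; but an embedded circuit is a nonzero class in the $H_1$ of a graph, so $T$ has no circuit and is a tree.

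Finally, the quasi-isometry $\phi\co X\to T$ then delivers the statement. The genuinely delicate points, beyond the bookkeeping in the quasi-isometry estimates, are making the nerve comparison honest: one must verify that $\Phi$ really sends the lifted loop $\gamma$ to a loop homotopic to the circuit $\sigma$ (using that, with no triple overlaps, each junction point $p_i$ lies on exactly one edge of $T$), and one must use completeness of $X$ to justify replacing the singular loops appearing in the homological computation by the honest short geodesic loops the hypotheses speak about and to guarantee the components behave well under the realisation. I would isolate the contractible-star observation and the lifting of circuits as the two technical lemmas carrying the proof.
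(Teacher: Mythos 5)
There is one genuine gap in your argument: you treat $f$ as if it were continuous, and nothing in the hypotheses gives you that. A bornologous map need not be continuous --- the expansion function controls $|f(x)-f(y)|$ only coarsely, not as $d(x,y)\to 0$ (e.g.\ $x\mapsto\lfloor x\rfloor$ is bornologous, and the hypothesis class is stable under arbitrary bounded perturbations, which destroy continuity). Consequently your parenthetical claim that the sets $f^{-1}(U_n)$ are open, hence path connected, is unjustified, and with it everything downstream that needs topology: there is no continuous partition of unity subordinate to a non-open cover, so the nerve map $\Phi$ need not exist as a continuous map, so $\Phi_*$ on $H_1$ --- the heart of your treeness argument --- has no meaning; likewise the circuit-lifting step, where you join $p_{i-1}$ to $p_i$ \emph{inside} $A_i$, breaks because connected components of an arbitrary subset need not be path components, and the claim that $\Phi(p_i)$ lies on the open edge $[A_i,A_{i+1}]$ also uses openness. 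This is exactly the difficulty the paper's proof is organized to avoid: it first discretizes, replacing $X$ by the ball graph $\Gamma(X,R,A)$ and $f$ by $\hat f$, defined only on vertices and extended affinely over edges, so the function actually used is continuous (piecewise affine); note how the proof of Proposition \ref{Prop: approx function} transfers the properness hypothesis using geodesics whose $f$-image stays in a fattened interval --- the kind of argument that survives discontinuity. Your gap is repairable without changing your architecture: on a geodesic space set $g(x)=\inf_{y\in X}\{f(y)+2\varrho_f(1)\,d(x,y)\}$; since $|f(x)-f(y)|\le\varrho_f(1)(d(x,y)+1)$ by subdividing geodesics, $g$ is finite, $2\varrho_f(1)$-Lipschitz, and satisfies $f-\varrho_f(1)\le g\le f$, while $g^{-1}[a-N,a+N]\subseteq f^{-1}[a-N-\varrho_f(1),a+N+\varrho_f(1)]$ shows each component of the former lies in a component of the latter, so $g$ is again metrically proper on the connected components; now run your nerve construction on $g$.

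With that repair your proof works and is genuinely different from the paper's. The paper follows Fujiwara--Whyte: cone off $3L$-balls on an $L$-separated net to get a simply connected $2$-complex $Y$ quasi-isometric to $X$, perturb $\tilde f$ off $\bz$, and collapse the track system $\tilde f^{-1}(\bz)$; the quotient is a tree because a quotient of a simply connected space with connected point-inverses is simply connected. You never leave dimension one: treeness of the nerve graph is obtained by playing the realization of circuit classes under $\Phi_*$ against the vanishing of $\Phi_*$ on the short generating loops, whose $f$-oscillation is $<R$ and which are therefore confined to a contractible closed star. Both proofs spend the hypotheses at the same two pressure points (bornologous for the upper quasi-isometry bound and the confinement of short loops; bounded components for properness and the lower bound), but your homological circuit-killing replaces the paper's simple-connectivity input, and --- once continuity is secured by regularization rather than by passing to $\Gamma(X,R,A)$ --- completeness of $X$, which the paper needs only for Lemma \ref{Lemma 2.3 FW}, appears to play no role; your closing appeal to completeness is, as far as I can see, not actually used anywhere in your argument.
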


An \emph{ultrametric space} is a metric space $(X,d)$ such that 
$$d(x,y)\leq \max \{d(x,z),d(z,y)\}$$
for all $x,y,z\in X$.

A map $f:X \to Y$ between metric spaces is
called \emph{quasi-symmetric} if it is not constant and if there
is a homeomorphism $\eta:[0,\infty) \to [0,\infty)$ such that from
$|xa|\leq t|xb|$ it follows that $|f(x)f(a)|\leq
\eta(t)|f(x)f(b)|$ for any $a,b,x\in X$ and all $t\geq 0$. 
The function $\eta$ is called the \emph{control function} of $f$.

A quasi-symmetric map is said to be \emph{power quasi-symmetric}
or \emph{PQ-symmetric}, if its control function is of the form
\[\eta(t)= q \max\{t^p,t^{\frac{1}{p}}\}\] for some $p,q\geq 1$.


In \cite{BS}, S. Buyalo and V. Schroeder introduce a special kind
of hyperbolic cones called hyperbolic approximations, which are
defined in general for non-necessarily bounded metric spaces. See section \ref{Section: Hyp ap} for definitions. This hyperbolic approximations $\mathcal{H}$ have a canonical level function which is a real valued map and, since $\mathcal{H}$ is a Gromov hyperbolic space it has uniformly generated $H_1$. Thus, Theorem \ref{Tma: intro} naturally yields a sufficent contition on the hyperbolic approximation to be quasi-isometric to a tree. 

In \cite{M} we prove that two metric spaces are PQ-symmetric if and only if their hyperbolic approximations are quasi-isometric. Also, it is immediate to see that the hyperbolic approximation of an ultrametric space is a tree. Therefore, we can use the condition above to conclude that a certain metric space is PQ-symmetric to an ultrametric space. In section \ref{sec: PQ} we introduce an alternative construction to the hyperbolic approximation. This allows us to solve some technical problems and find the following intrinsic sufficent condition for 
a metric space to be PQ-symmetric to an ultrametric space.

A $\varepsilon$-\emph{chain} is a finite sequence of points $x_0, ..., x_N$ that are separated by distances of $\varepsilon$ or less: $|x_i - x_{i+1}| < \varepsilon$. Two points are $\varepsilon$-\emph{connected} if there is a $\varepsilon$-chain joining them. Any two points in a $\varepsilon$-\emph{connected set} can be linked by a $\varepsilon$-chain. A $\varepsilon$-\emph{connected component} is a maximal $\varepsilon$-connected subset. 

\begin{definicion} A metric space $X$ is $D$-\textbf{finitely} $\varepsilon$-\textbf{connected} if for any two points $x,x'\in X$ there is a $\varepsilon$-chain $x=x_0,x_1,...x_N=x'$ with $N\leq D$.
\end{definicion}

\begin{cor} Let $Z$ be a metric space. If there are constants $D>0$ and $0<r<\frac{1}{6}$ such that every $r^k$-connected component is $D$-finitely $r^k$-connected for any $k\in \bz$, then $Z$ is PQ-symmetric to an ultrametric space.
\end{cor}

\section{Real valued functions on metric spaces.}\label{section 2}

\begin{definicion} Given a map $f\co X \to Y$ between metric spaces, a non-decreasing function $\varrho_f\co J\to [0,\infty)$ with $J=[0,T]$ or $J=[0,\infty)$ is called \emph{expansion function} if $\forall A \in X$ with $diam(A) \in J$, \
$diam(f(A))\leq \varrho_f(diam(A))$.
\end{definicion}

A map $f: X_1 \to X_2$ between two metric
spaces is \emph{bornologous} if for every $R>0$ there is $S>0$
such that for any two points $x,x'\in X_1$ with $d(x,x')<R$,
$d(f(x),f(x'))<S$. For convinience, we are going to use also the following equivalent definition.

\begin{definicion} A map $f\co X \to Y$ between metric spaces is called 
\emph{bornologous} if there is an expansion function $\varrho_f \co [0,\infty) \to [0,\infty)$.
\end{definicion}


\begin{definicion}
A map $f$ between two metric spaces $X,  X'$ is \emph{metrically
proper} if for any bounded set $A$ in $X'$, \ $f^{-1}(A)$ is
bounded in $X$.
\end{definicion}

A map is called \emph{coarse} if it is metrically proper
and bornologous. Two metric spaces $X,Y$ are \emph{coarse equivalent} if there are maps $f\co X \to Y$ and $g\co Y \to X$ such that $f\circ g$ and $g\circ f$ are close to the identity. Although this notion is more general than that of quasi-isometry it is well known, and an easy exercise, that if $X,Y$ are geodesic spaces, then $X,Y$ are coarse equivalent if and only if they are quasi-isometric. See \cite{Roe1}. In this work, considering the references, talking about quasi-isometry seems more natural. However, for the proof of the following theorem coarse approach will be useful.

\begin{definicion} Given a metric space $X$, we say that $f\co X \to \br$ is \textbf{metrically proper on the connected components} if $\forall \, N>0$ there is some $M>0$ such that for any interval $[x-N,x+N]$, the diameter of every connected component of $f^{-1}[x-N,x+N]$ is bounded above by $M$.
\end{definicion}

\begin{teorema}\label{Th: qi} Let $X$ be a graph with $H_1(X)$ uniformly generated and $|X|$ be the geometric realization where every edge has length 1. If there is a function
$f\co |X| \to \br$ such that $f$ is bornologous and metrically proper on the connected components, then $|X|$ is quasi-isometric to a 
tree. In particular, $|X|$ is Gromov hyperbolic.
\end{teorema}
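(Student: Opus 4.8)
The plan is to replace $|X|$ by a combinatorial model built from the level slabs of $f$ and then to prove that this model is a quasi-tree. For $n\in\bz$ set $B_n:=f^{-1}([n,n+1])$, and let $\Gamma$ be the graph whose vertices are the connected components of the sets $B_n$, two such components being joined by an edge exactly when they meet in $|X|$ (so that they necessarily lie in consecutive slabs $B_n$ and $B_{n+1}$, since distinct components of the same slab are disjoint and $B_n\cap B_{n+2}=\emptyset$). Record the level $h(C)=n$ for a component $C\subseteq B_n$; by construction $h$ changes by $\pm 1$ along every edge and there are no edges inside a level, so $\Gamma$ is a graded graph. Since $|X|$ is connected, so is $\Gamma$.

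First I would show that the map $\pi\co |X|\to \Gamma$ sending a point $p$ to a component of $B_{\lfloor f(p)\rfloor}$ containing it is a quasi-isometry. The upper bound uses that $f$ is bornologous: subdividing a geodesic from $p$ to $q$ into steps of length $\le 1$, each step has $f$-image of diameter $\le \varrho_f(1)$ and hence, being connected, lies in a single component of a slab of bounded width, moving $\pi$ only a bounded amount per step. The lower bound uses metric properness on the components: each vertex-component has diameter $\le M$ (applying the definition with $N=\tfrac12$ to the interval $[n,n+1]$), so an edge-path of length $k$ in $\Gamma$ lifts to a genuine path in $|X|$ of length $\le (k+1)M$. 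Coarse surjectivity is immediate. Thus $|X|$ and $\Gamma$ are quasi-isometric, and it remains to prove that $\Gamma$ is quasi-isometric to a tree.

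The two hypotheses translate into two intrinsic features of $\Gamma$ that I would isolate next. Metric properness on the components says that, for each $N$, every connected component of the subgraph of $\Gamma$ spanned by any $2N+1$ consecutive levels has diameter at most some $M'(N)$; informally, bounded-height slabs fall apart into uniformly bounded pieces, and this is precisely what forbids two-dimensional behaviour (the first-coordinate function on $\bz^2$ fails exactly here, its slabs being unbounded). Uniform generation of $H_1$, which is preserved under quasi-isometry of graphs, gives that $H_1(\Gamma)$ is generated by loops of length at most some $L'$; this forbids long thin cycles (a single growing cycle $C_{2k}$ is not uniformly a quasi-tree, and indeed its $H_1$ is not uniformly generated). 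Both restrictions are needed, and they rule out complementary obstructions.

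Finally I would verify Manning's bottleneck criterion from \cite{Man} for $\Gamma$: there is $\Delta$ such that for all $u,v$, the midpoint $\mu$ of a geodesic $[u,v]$ is met within $\Delta$ by every path from $u$ to $v$. Given such a path $\alpha$, the loop formed by $\alpha$ and $[u,v]$ is a sum of loops of length $\le L'$, so it admits a filling by boundedly many short loops; each short loop spans only a bounded range of levels and, by the slab bound, lies in a piece of uniformly bounded diameter. Tracking this filling outward from the height of $\mu$, any attempt by $\alpha$ to go around $\mu$ would have to reconnect across a bounded-width slab, which by the slab bound cannot occur far from $\mu$; hence $\alpha$ is forced within a uniform distance of $\mu$. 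I expect this last step to be the main obstacle: the delicate point is to quantify how the short-loop filling interacts with the level function so that the two separate bounds---no fat parts and no long cycles---combine into a single uniform bottleneck constant. With the bottleneck property established, $\Gamma$, and therefore $|X|$, is quasi-isometric to a tree; as Gromov hyperbolicity is a quasi-isometry invariant of geodesic spaces and trees are $0$-hyperbolic, this also yields the final assertion that $|X|$ is Gromov hyperbolic.
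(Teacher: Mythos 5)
Your reduction to the slab-component graph $\Gamma$ is plausible (modulo minor care: $f$ is only bornologous, not continuous, so one should check that components of $f^{-1}[n,n+1]$ behave well enough to lift edge-paths, and your properness bound with $N=\tfrac12$ only controls components of a single slab, whereas adjacency in $\Gamma$ uses components of overlapping slabs). But the proof has a genuine gap exactly where you flag it yourself: the verification of Manning's bottleneck criterion is not carried out. Knowing that the loop $\alpha\cup[u,v]$ is \emph{homologous} to a sum of loops of length $\le L'$, each lying in a set of diameter $\le M'(L')$, does not by itself force $\alpha$ near the midpoint $\mu$: the heuristic that ``$\alpha$ would have to reconnect across a bounded-width slab'' presupposes that the slab component through $\mu$ separates $u$ from $v$, and level-set components do \emph{not} separate in a space with nontrivial loops (consider a single long cycle, where every level component is crossed twice and separates nothing; there uniform generation of $H_1$ fails, but this shows the separation must be \emph{derived} from the filling, not assumed). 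Quantifying how a 2-dimensional homological filling by short loops pins paths to the midpoint is precisely the content of the theorem, and ``I expect this last step to be the main obstacle'' concedes that it is missing.

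For comparison, the paper resolves exactly this point by a different route: it rescales $f$ so that $\varrho_f(L)<\tfrac14$, cones off $3L$-balls centered at an $L$-separated net (following \cite{F-W}) to obtain a \emph{simply connected} 2-complex $Y$ quasi-isometric to $|X|$ --- this is where uniform generation of $H_1$ is spent --- and extends $f$ to an affine map $\tilde f$ avoiding $\bz$ on vertices. In the simply connected $Y$, the preimages $\tilde f^{-1}(\bz)$ are tracks which genuinely \emph{do} separate, and smashing track neighborhoods to intervals and complementary components to points yields a simply connected graph, hence a tree $T$; metric properness on the connected components is then used only to show the quotient $Y\to T$ is metrically proper, hence a quasi-isometry. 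In other words, the paper converts your missing separation step into a topological fact (tracks separate simply connected 2-complexes) by first killing $H_1$ geometrically, rather than attempting a direct quantitative bottleneck estimate in the original space. To complete your argument you would either need to carry out that filling-versus-level-function estimate in full (in effect reproving the track separation in coarse form), or adopt the coning construction, at which point the quotient-tree proof is both shorter and already yields the quasi-isometry without invoking the bottleneck criterion.
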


\begin{proof} Let us rescale $f$ to ensure that $\varrho_f(L)<\frac{1}{4}$ 
where $L\in \mathbb{Z}_+$ is an upper bound for the length of the loops generating $H_1(X)$. Now, let us build a simply connected 2-complex $Y$ quasi-isometric to $X$ following the idea in \cite{F-W}. Let $A$ be a maximal collection of vertices in $X$ with $d(a,a')\geq L$ for all $a\neq a'$. 
Let $R=3L$, and let $Y$ be the space $$X\cup_{a\in A}cone (\bar{B}(a,R))$$

In words, $Y$ is $X$ with each closed $R$\--ball centered at $a \in A$ coned to a point. Give $Y$
the induced path metric where each cone line has length $R$. The inclusion of $X$ is then
isometric, and has coarsely dense image, and so is a quasi-isometry. Also, the resulting space $Y$ is simply connected (see \cite{F-W}). Let us denote by $v_a$ the cone vertex for the ball $B(a,R)$.

Let us define a map \begin{equation}\label{Extension FW} \tilde{f}\co Y\to \br \end{equation} as a coarse extension of $f$. For every vertex $x\in X\subset Y$, let $\tilde{f}(x)\in (f(x)-\frac{1}{4},f(x)-\frac{1}{4})\backslash \bz$. For every $v_a$, let $\tilde{f}(v_a)=\tilde{f}(a)$. Let $\tilde{f}$ be the affine extension on $Y$.

Let us show, just by triangle inequality, that $\tilde{f}|_X$ is $1$-close to $f$. For any vertex $x_i$, $|f(x_i)-\tilde{f}(x_i)|<\frac{1}{4}$. Let $x$ be a point in the realization of an edge $[x_1,x_2]$ and let us assume that $d(x,x_1)\leq \frac{1}{2}$. Since $\varrho_f(L)<\frac{1}{4}$ with $L\geq 1$, $|f(x)-f(x_i)|<\frac{1}{4}$ for $i=1,2$. Hence, $|\tilde{f}(x)-\tilde{f}(x_1)|\leq \frac{1}{2}|\tilde{f}(x_2)-\tilde{f}(x_1)|<\frac{1}{2}\frac{3}{4}$ and we conclude that $|f(x)-\tilde{f}(x)|<1$.

$\tilde{f}$ is bornologous. In particular, the image of every simplex has diameter bounded above by $R\frac{3}{4}$.

Let $\mathcal{V}$ be the vertex set of $Y$. Since $\tilde{f}^{-1}(k)\cap V=\emptyset \ \forall k \in \bz$ then for any simplex $\Delta \in Y$, and every $t\in \br$, $\tilde{f}^{-1}(t)\cap \Delta$ is a track.

Thus, $\tau=\tilde{f}^{-1}(\bz)$ is a union of tracks in $Y$. Each such a track separates $Y$ in two connected components and has a product neighborhood $\eta(\tau)=\tau \times (-\frac{1}{2},\frac{1}{2})$ in the complement of the 0-skeleton $\mathcal{V}$ of $Y$. A quotient space $T$ of $Y$ is obtained by smashing each component of $\eta(\tau)$ to an interval and each component of the complement of $\eta(\tau)$ to a point.


Let $\pi\co Y\to T$ be the quotient map. Clearly $T$ is a simplicial graph. Since $Y$ is simply connected and the preimage of each point under $\pi$ is connected, $T$ must be simply connected. In particular, $T$ is a tree.

Claim. The quotient map is a quasi-isometry. Since both spaces are geodesic, as explained above, it suffices to check that it is a coarse equivalence. Let $K>0$ and consider $x,y\in Y$ with $d(x,y)<K$. 

Since the 2-complex is quasi-isometric to its 1-skeleton, there are constants $\lambda,C>0$ and a sequence of vertices $x_1,...x_{k}$ such that $x,y$ are in simplices adjacent to $x_1,x_k$ respectively, $\{x_i,x_i+1\}$ is joined by an edge and $k\leq \lambda K+C$.

Clearly, either $\pi(x_i)$ and $\pi(x_i+1)$ are the same vertex in $T$ or else, there is a track in $\tau$ which crosses the edge $\{x_i,x_{i+1}\}$ which implies that there is an edge between $\pi(x_i)$ and $\pi(x_i+1)$. Therefore, it is trivial to check that $d(\pi(x),\pi(y))\leq k+2\leq \lambda K+ C+2$ proving that $\pi$ is bornologous.

To check that $\pi$ is metrically proper it suffices to see that for every 1-simplex $e\in T$, $\pi^{-1}(e)$ has uniformly bounded diameter which is immediate since $f$ is metrically proper on the connected components.

As a coarse inverse of $\pi$, let us define a map $i\co T\to Y$ such that for any $w\in \mathcal{W}$, the vertex set of $T$, $i(w)$ is any point in the corresponding vertex set of $Y$ and for any $x\in T\backslash \mathcal{W}$, if $x\in e\in \mathcal{E}(T)$, $i(x)$ is any point in the corresponding component of $\tau$. It is trivial to check that $i$ is a coarse inverse for $\pi$.
\end{proof}

The aim of the rest of the section is to extend 
this result to complete geodesic spaces.

A subset $A$ in a metric space $X$ is called \emph{R-separated},
$R>0$, if $d(a,a')\geq R$ for any distinct $a,a'\in A$. Note that
if $A$ is maximal with this property, then the union $\cup_{a\in
A} B_R(a)$ covers $X$.

Fix a constant $R>0$ and let $A$ be a maximal
$R$-separated set. Let us define a graph
$\Gamma(X,R,A)$ as follows. For every $a\in A$, consider the ball
$B(a,2R)\subset X$. Let $V$ be the the set of balls $B(a,2R)$, $a\in A$. Therefore, if
for some $a,a'\in A$, $B(a,2R)=B(a',2R)$, then they represent the same point $v$
in $V$. Let us denote the corresponding ball simply by $B(v)$. Let $V$ be the vertex set of $\Gamma(X,R,A)$.
Vertices $v,v'$ are connected by an edge if and only the close balls
$\bar{B}(v),\bar{B}(v')$ intersect, $\bar{B}(v)\cap
\bar{B}(v')\neq \emptyset$.

Consider the path metric on the geometric realization $|\Gamma(X,R,A)|$ for which every edge has length 1. $|vv'|$
denotes the distance between points $v,v'\in V$ in $|\Gamma(X,R,A)|$, while
$d(a,a')$ denotes the distance in $X$. 

Let us define a map $j\co |\Gamma(X,R,A)| \to X$ such that for any vertex $v\in V$, $j(v)=a$ for some $a\in A$ with $B(a,2R)=B(v)$ and for any edge with realization $e=[v,v'] \in |\Gamma(X,R,A)|$, let $j\co e \to X$ be a geodesic path $[j(v),j(v')]$.

\begin{prop}\label{Prop: coarse approx} $j\co |\Gamma(X,R,A)| \to X$ is a quasi-isometry.
\end{prop}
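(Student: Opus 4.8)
The plan is to show that $j$ is a quasi-isometry by establishing the two required inequalities on distances together with coarse density of the image. Because both $|\Gamma(X,R,A)|$ and $X$ are geodesic, by the remark recalled in the excerpt it suffices to verify that $j$ is a coarse equivalence, i.e. that $j$ is bornologous, metrically proper, and coarsely surjective, and then exhibit a coarse inverse. I would first treat vertices: for adjacent $v,v'$ the balls $\bar B(v),\bar B(v')$ meet, so $d(j(v),j(v'))=d(a,a')\leq 4R$ by the triangle inequality, while $R$-separation gives $d(a,a')\geq R$ whenever $v\neq v'$. These two bounds are the heart of the estimate.

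Next I would promote the per-edge bounds to a global comparison of distances. For the upper (Lipschitz-type) bound, any point of $|\Gamma(X,R,A)|$ lies on an edge and edges are sent to geodesics of length at most $4R$, so $j$ moves points at combinatorial distance $n$ by at most $4Rn$; hence $d(j(p),j(q))\leq 4R\,|pq|+4R$, giving that $j$ is bornologous. For the lower bound I would argue in the reverse direction: given $x,x'\in X$, use coarse density (the balls $B(a,R)$ cover $X$ since $A$ is maximal $R$-separated) to find vertices $v,v'$ with $d(x,j(v))\leq R$ and $d(x',j(v'))\leq R$, and then show that a short $X$-geodesic between them forces a short edge-path in $\Gamma$. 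Concretely, sampling points along the geodesic at spacing comparable to $R$ and replacing each by a nearby center $a\in A$ produces a sequence of consecutive centers whose successive balls $\bar B(2R)$ overlap, hence an edge-path; counting the samples yields $|vv'|\leq \lambda\, d(x,x')+C$ for explicit constants depending only on $R$. Coarse surjectivity of $j$ is immediate from the same covering property, since $d(x,j(v))\leq R$ for the chosen $v$.

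For metric properness and a clean coarse inverse, I would define $k\co X\to |\Gamma(X,R,A)|$ by sending each $x$ to a vertex $v$ with $x\in B(j(v),2R)$ (one exists by coarseness of the cover). The edge-path construction above shows $k$ is bornologous, and the preceding bounds show $j\circ k$ and $k\circ j$ are each within a bounded distance of the respective identities, so $k$ is a coarse inverse; this simultaneously gives properness of $j$.

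The step I expect to be the main obstacle is the lower distance bound, that is, turning a short path in $X$ into a short edge-path in $\Gamma(X,R,A)$. The subtlety is that $A$ is merely a \emph{maximal} $R$-separated set, so consecutive sample points need not be close to the \emph{same} center, and one must choose the sampling spacing carefully (around $R$) so that centers assigned to adjacent samples have overlapping $2R$-balls, guaranteeing an edge while still controlling the total number of edges by the length of the geodesic. Getting the constants to interlock — spacing small enough to force edges yet large enough to keep the vertex count linear in $d(x,x')$ — is the essential quantitative point; everything else is routine triangle-inequality bookkeeping.
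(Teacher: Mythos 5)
Your proposal is correct and follows essentially the same route as the paper: the upper bound comes from the fact that an edge forces the two $2R$-balls to intersect (centers within $4R$), and the lower bound comes from sampling an $X$-geodesic at spacing about $R$ and using maximality of the $R$-separated set to replace samples by centers whose balls overlap, yielding an edge-path of controlled length --- exactly the paper's argument, including the covering property for coarse density. Your packaging via a coarse inverse $k$ is a cosmetic difference; the paper instead writes out the two-sided quasi-isometry inequalities directly, with explicit constants $(4R,\,9R+2)$.
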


\begin{proof} First, let us consider the restriction to the vertices in $\Gamma(X,R,A)$. Suppose $v,v'\in V$ with $|vv'|=k$. Then, there are vertices $v_0=v,v_1,...,v_{k-1},v_k=v'$ such that $\{v_{i-1},v_i\}$ is an edge in $\Gamma(X,R,A)$, i.e. $\bar{B}(v_{i-1})\cap \bar{B}(v_{i})\neq \emptyset$ and, therefore, 
\begin{equation}\label{Eq 1} d(j(v),j(v'))\leq 4R|vv'|. \end{equation}

Let $\gamma$ be a geodesic path $[j(v),j(v')]$ of length $l$. Let $k'=[\frac{l}{R}]+1$ and $x_i$ the point $\gamma(\frac{i\cdot l}{k'})$ for $i=0,k'$. Since $A$ is an $R$-separated set, there is some $a_i\in B(x_i,R)$ for every $i=1,k'-1$ and, considering $v_0=v$, $v_{k'}=v$ it is immediate to check that $\bar{B}(v_{i-1})\cap \bar{B}(v_{i})\neq \emptyset$ for $i=1,k'$. This implies that $|vv'|\leq k'\leq \frac{l}{R} +1$ and thus
\begin{equation}\label{Eq 2} R|vv'|-R \leq d(j(v),j(v')).\end{equation}

If $x,x'\in \Gamma(X,R,A) \backslash V$, the upper bound for $d(j(x),j(x'))$ is trivially $4R|xx'|$. For the lower bound consider a geodesic path in $[x,x']$ in $\Gamma(X,R,A)$ and let $v,v'$ the vertices in that path adjacent to $x$ and $x'$ respectively. Then, $d(j(x),j(v))\leq 4R$ and $d(j(x'),j(v'))\leq 4R$ by construction. From triangle inequality and equation (\ref{Eq 2}) we finally obtain that
\begin{equation}\label{Eq 3} R|x,x'|-9R-2 \leq d(j(x),j(x'))\leq 4R|x,x'|.\end{equation}

It is trivial from the construction that any point of $X$ is at most at distance $2R$ from $j(\Gamma(X,R,A))$. Thus, $j$ is a $(4R,9R+2)$-quasi-isometry.
\end{proof}

\begin{lema}\label{Lemma 2.3 FW} \cite[Lemma 2.3]{F-W} Let $X$ be a complete geodesic metric space. The following are equivalent:
\begin{itemize}\item $X$ has uniformly generated $H_1$.
\item $X$ is quasi-isometric to a complete geodesic metric space, $Y$, with $H_1(Y)=0$.
\end{itemize}
\end{lema}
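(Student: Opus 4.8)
The plan is to prove the two implications separately, using coning for the forward direction and quasi-isometry invariance for the converse. The guiding principle is that ``uniformly generated $H_1$'' is precisely the coarse shadow of ``$H_1 = 0$'': coning off all loops up to a fixed length both kills $H_1$ and moves the space only a bounded amount, so the two conditions ought to be quasi-isometry equivalent.

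Forward direction. Assume $H_1(X)$ is generated by loops of length at most $L$, and repeat, in this general setting, the coning construction from the proof of Theorem \ref{Th: qi}: choose a maximal $L$-separated set $A$ and form
$$Y = X \cup_{a \in A} \mathrm{cone}(\bar B(a, 3L)),$$
with the induced length metric in which each cone segment has length $3L$. As in Proposition \ref{Prop: coarse approx}, the inclusion $X \hookrightarrow Y$ is an isometric embedding whose image is $L$-dense (the balls $B(a,L)$ cover $X$ by maximality), hence a quasi-isometry, and $Y$ is again complete and geodesic. It then remains to show $H_1(Y) = 0$. A generating loop through a point $p$ has diameter at most $L/2$, and $p$ lies within $L$ of some $a \in A$, so the loop is contained in $\bar B(a, 3L)$ and therefore bounds a disk (the cone) in $Y$; thus every generator of $H_1(X)$ dies in $H_1(Y)$. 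A Mayer--Vietoris argument, splitting $Y$ into a neighborhood of the contractible cones and a neighborhood of $X$, shows that the attached cones introduce no new one-dimensional classes, so $i_* \co H_1(X) \to H_1(Y)$ is onto; since its kernel already contains all the generators, $H_1(Y) = 0$.

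Converse. Here I would isolate the key claim that \emph{uniform generation of $H_1$ is a quasi-isometry invariant among complete geodesic spaces}. Given a quasi-isometry $\phi \co X \to Y$ with quasi-inverse $\psi$, and assuming $H_1(X)$ is generated by loops of length at most $L$, I take any loop $c$ in $Y$, subdivide it into short arcs with endpoints $y_0, \dots, y_n = y_0$, and build the loop $c' = [x_0,x_1] \cup \dots \cup [x_{n-1},x_0]$ in $X$ through the points $x_i = \psi(y_i)$, joining consecutive ones by geodesics. I write $[c'] = \sum_j [\ell_j]$ with each $\ell_j$ of length at most $L$, push forward by $\phi$, and close up the images $\phi(\ell_j)$ into genuine loops of length at most some $L'$ depending only on $L$ and the quasi-isometry constants. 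Since $\phi \circ \psi$ is at bounded distance from the identity, $c$ and the pushforward $\phi_*(c')$ differ by ``discrepancy'' loops built from uniformly short segments, which are again loops of length at most $L'$. Hence $[c]$ is a sum of loops of length at most $L'$, and $H_1(Y)$ is uniformly generated. Applying this claim with the roles of $X$ and $Y$ exchanged completes the converse: the hypothesis $H_1(Y) = 0$ is vacuously uniform generation, and transferring it across the quasi-isometry yields uniform generation of $H_1(X)$.

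Main obstacle. The seemingly routine constant bookkeeping in the loop-transfer step is where the real care is required: one must guarantee simultaneously that pushing forward a whole sum of short loops produces loops whose length stays below a single bound $L'$, and that every discrepancy loop arising from $\phi \circ \psi \simeq \mathrm{id}$ is uniformly short; both force the subdivision scale to be chosen in terms of the quasi-isometry constants. In the forward direction the analogous delicate point is running Mayer--Vietoris cleanly when the balls $\bar B(a, 3L)$ overlap and $A$ is infinite, while at the same time checking that coning preserves completeness and geodesicity of the resulting length metric.
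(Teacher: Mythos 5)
The paper itself offers no proof of this lemma: it is imported verbatim from \cite[Lemma 2.3]{F-W}, so your attempt can only be measured against the Fujiwara--Whyte argument. Your forward direction is essentially correct and is exactly their coning construction (the same one reused in the proof of Theorem \ref{Th: qi}): every generating loop of length at most $L$ has diameter at most $L/2$, hence lies in some $\bar B(a,3L)$ and is killed by the attached cone, and a van Kampen/Mayer--Vietoris argument over the cones gives surjectivity of $H_1(X)\to H_1(Y)$, whence $H_1(Y)=0$; the points you flag (completeness, geodesicity, overlapping balls) are genuine but routine.

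The converse, however, contains a genuine gap. You write ``push forward by $\phi$'' and even ``$\phi_*(c')$'', but a quasi-isometry is in general not continuous, so it induces no map on singular homology; the identity $[c']=\sum_j[\ell_j]$ in $H_1(X)$ is witnessed by a singular $2$-chain $\beta$ with $\partial\beta=c'-\sum_j\ell_j$, and discretizing and pushing forward only the $1$-dimensional data --- the loops $\ell_j$ and the discrepancy rectangles coming from $\phi\circ\psi$ being close to the identity --- produces short loops in $Y$ but \emph{no homology relation} tying $[c]$ to them. That relation can only come from transporting $\beta$ itself, and this two-dimensional step is the heart of the matter: one must barycentrically subdivide $\beta$ until every singular simplex has image of diameter at most $1$ (possible since the chain is finite and each singular simplex is uniformly continuous), then replace each small simplex by the geodesic triangle spanned by the $\phi$-images of its three vertices; each such triangle has perimeter bounded by roughly $3(\lambda+C)$, so the subdivided relation transports to $Y$ and exhibits the discretized image of $c'$ as a sum of uniformly short loops modulo boundaries. (For the implication actually needed, with $H_1(Y)=0$, the same device runs in reverse: fill the discretized image of a loop of $X$ by a $2$-chain in $Y$, subdivide it, and pull the $1$-skeleton back by $\psi$ with geodesic interpolation.) Your ``main obstacle'' paragraph locates the difficulty in length bookkeeping for $1$-cycles, but the real obstruction is transporting bounding $2$-chains; without the subdivision device, the step from $[c']=\sum_j[\ell_j]$ to any statement in $H_1(Y)$ simply does not go through.
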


(These are also equivalent to the condition $H^{uf}_1 (X)= 0$ which is the form which
appears in \cite{BW})

\begin{prop}\label{Prop: coarse approx2} If $X$ is a complete geodesic space with uniformly generated $H_1$, then $|\Gamma(X,R,A)|$ is quasi-isometric to $X$ and $H_1(\Gamma(X,R,A))$ is uniformly generated.
\end{prop}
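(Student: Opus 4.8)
The first assertion is immediate: Proposition \ref{Prop: coarse approx} already establishes that $j\co |\Gamma(X,R,A)| \to X$ is a quasi-isometry, so $|\Gamma(X,R,A)|$ and $X$ are quasi-isometric with no further work. The real content is the second assertion, that $H_1(\Gamma(X,R,A))$ is uniformly generated, and the plan is to deduce it from Lemma \ref{Lemma 2.3 FW}, which I would read as saying that, among complete geodesic spaces, having uniformly generated $H_1$ is a quasi-isometry invariant. The whole proof then reduces to two applications of that lemma stitched together by a quasi-isometry.

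Concretely, I would argue as follows. Since $X$ is complete geodesic with uniformly generated $H_1$, the forward direction of Lemma \ref{Lemma 2.3 FW} provides a complete geodesic space $Y$ with $H_1(Y)=0$ that is quasi-isometric to $X$. By Proposition \ref{Prop: coarse approx}, $|\Gamma(X,R,A)|$ is quasi-isometric to $X$, and since the composition of quasi-isometries is again a quasi-isometry, $|\Gamma(X,R,A)|$ is quasi-isometric to $Y$. Applying Lemma \ref{Lemma 2.3 FW} now to the space $|\Gamma(X,R,A)|$ in the reverse direction — its hypothesis is met, since $Y$ is a complete geodesic space with vanishing $H_1$ to which $|\Gamma(X,R,A)|$ is quasi-isometric — yields that $|\Gamma(X,R,A)|$ has uniformly generated $H_1$. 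Finally, because $\Gamma(X,R,A)$ is a graph and $|\Gamma(X,R,A)|$ carries the path metric in which every edge has length $1$, the first homology of the combinatorial graph agrees with that of its realization and the metric length of a loop coincides with its combinatorial length; hence uniform generation for $|\Gamma(X,R,A)|$ is exactly uniform generation of $H_1(\Gamma(X,R,A))$.

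The one point requiring genuine care — the step I expect to be the main obstacle — is verifying that $|\Gamma(X,R,A)|$ is itself a complete geodesic metric space, since Lemma \ref{Lemma 2.3 FW} is stated only for such spaces. It is geodesic by construction, and it is connected because $X$ is geodesic and the balls $B(a,R)$ cover $X$, so a geodesic between any two points of $X$ yields, exactly as in the $k'$-construction of Proposition \ref{Prop: coarse approx}, a chain of overlapping balls and hence an edge-path in $\Gamma(X,R,A)$. For completeness I would check that it does not silently require local finiteness: in a connected graph whose edges all have length $1$, a Cauchy sequence eventually has diameter less than $\tfrac{1}{2}$, so it cannot oscillate between two edges meeting at a vertex (such oscillation forces consecutive distances close to $\tfrac{1}{2}$) and is therefore eventually trapped in a single edge or at a single vertex, where it converges. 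Once this completeness is in place, the two applications of Lemma \ref{Lemma 2.3 FW} close the argument.
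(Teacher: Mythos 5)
Your proof is correct and follows essentially the same route as the paper: quote Proposition \ref{Prop: coarse approx} for the quasi-isometry, apply Lemma \ref{Lemma 2.3 FW} forward to obtain $Y$ with $H_1(Y)=0$, compose quasi-isometries, and apply the lemma in reverse to $|\Gamma(X,R,A)|$. Your verification that $|\Gamma(X,R,A)|$ is a complete geodesic space (needed to invoke the lemma) is a detail the paper leaves implicit, and it is a worthwhile addition rather than a deviation.
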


\begin{proof} By Proposition \ref{Prop: coarse approx}, $|\Gamma(X,R,A)|$ is quasi-isometric to $X$. 

By Lemma \ref{Lemma 2.3 FW}, $X$ is quasi-isometric to a complete geodesic metric space, $Y$, with $H_1(Y)=0$. Therefore, again by \ref{Lemma 2.3 FW}, $H_1(\Gamma(X,R,A))$ is uniformly generated.
\end{proof}

Given a function $f\co X \to \br$, let us define $\hat{f}\co |\Gamma(X,R,A)| \to \br$ such that for any vertex $v\in V$, $\hat{f}(v)=f(j(v))$. Then extend $\hat{f}$ affinely on the edges.

\begin{prop}\label{Prop: approx function} Let $X$ be a complete geodesic space. 
If $f\co X \to \br$ is bornologous and metrically proper on the connected components then for any $R>0$ and any maximal $R$--separated set $A\subset X$, $\hat{f}\co |\Gamma(X,R,A)| \to \br$ holds the same properties.
\end{prop}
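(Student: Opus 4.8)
The plan is to transfer both properties of $f$ to $\hat f$ through the quasi-isometry $j$ of Proposition \ref{Prop: coarse approx}, keeping track of the error introduced by the affine interpolation on edges. The key preliminary observation I would record is that $\hat f$ is uniformly close to the composition $f\circ j$. Indeed, $\hat f$ and $f\circ j$ agree on the vertex set $V$ by definition, and for any point $x$ in the realization of an edge $e=[v,v']$ both values lie within $\varrho_f(4R)$ of $f(j(v))$: the affine value $\hat f(x)$ does, because adjacency of $v,v'$ forces $\bar B(v)\cap\bar B(v')\neq\emptyset$ and hence $d(j(v),j(v'))\leq 4R$, so $|\hat f(v)-\hat f(v')|=|f(j(v))-f(j(v'))|\leq\varrho_f(4R)$; and $f(j(x))$ does, because $j(x)$ lies on the geodesic $[j(v),j(v')]$ of length $\leq 4R$, whence $d(j(v),j(x))\leq 4R$. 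Consequently $|\hat f(x)-f(j(x))|\leq 2\varrho_f(4R)$ everywhere, and in particular $\hat f$ varies by at most $\varrho_f(4R)$ along any single edge.

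For the bornologous property I would then argue at the high level first: since $j$ is a quasi-isometry it is bornologous, and $f$ is bornologous by hypothesis, so $f\circ j$ is bornologous, and $\hat f$, being within the fixed constant $2\varrho_f(4R)$ of it, is bornologous as well. Made explicit, for $x,x'$ with $|xx'|\leq N$ the upper bound in (\ref{Eq 3}) gives $d(j(x),j(x'))\leq 4RN$, hence $|f(j(x))-f(j(x'))|\leq\varrho_f(4RN)$, and therefore $|\hat f(x)-\hat f(x')|\leq\varrho_f(4RN)+4\varrho_f(4R)$, which defines a non-decreasing expansion function $\varrho_{\hat f}$.

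The heart of the argument is metric properness on the connected components. Fix $N>0$, an arbitrary center $t_0\in\br$, and a connected component $C$ of $\hat f^{-1}[t_0-N,t_0+N]$. Since $j$ is continuous (it is geodesic on each edge), $j(C)$ is connected. I would show $j(C)\subseteq f^{-1}[t_0-N',t_0+N']$ with $N':=N+2\varrho_f(4R)$: for a vertex $x\in C$ this is immediate since $f(j(x))=\hat f(x)\in[t_0-N,t_0+N]$; for $x\in C$ interior to an edge $[v,v']$, the edge-variation bound gives $|\hat f(v)-\hat f(x)|\leq\varrho_f(4R)$, so $\hat f(v)=f(j(v))\in[t_0-N',t_0+N']$ already, and $|f(j(x))-f(j(v))|\leq\varrho_f(4R)$ then places $f(j(x))$ in $[t_0-N',t_0+N']$ as well. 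As $j(C)$ is connected and contained in $f^{-1}[t_0-N',t_0+N']$, it lies in a single connected component of the latter, whose diameter is bounded by some $M_0=M_0(N')$ because $f$ is metrically proper on the connected components. Finally the lower bound in (\ref{Eq 3}) converts this into a bound in $|\Gamma(X,R,A)|$: $\mathrm{diam}(C)\leq\frac1R\big(\mathrm{diam}(j(C))+9R+2\big)\leq\frac1R(M_0+9R+2)=:M$, a quantity depending only on $N$, which is exactly the required conclusion.

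The only genuinely delicate point, and the one I expect to be the main obstacle, is that on the interior of an edge $\hat f$ is not literally $f\circ j$ but its affine interpolation, and the endpoints $v,v'$ of an edge meeting $C$ need not themselves belong to $C$; the argument must therefore bound $\hat f(v)$ from the values of $\hat f$ attained on $C\cap e$ rather than assume $v\in C$. This is precisely what the edge-variation estimate $|\hat f(v)-\hat f(x)|\leq\varrho_f(4R)$ supplies, and it is the reason the interval has to be enlarged to parameter $N'=N+2\varrho_f(4R)$ rather than left unchanged. Everything else is a routine propagation of the quasi-isometry constants $(4R,9R+2)$ coming from Proposition \ref{Prop: coarse approx}.
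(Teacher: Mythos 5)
Your proof is correct and follows essentially the same route as the paper's: you bound the expansion of $\hat f$ via $d(j(v),j(v'))\leq 4R$ on edges, and for properness you push a component of $\hat f^{-1}$ of an interval through $j$ into a single component of $f^{-1}$ of an interval enlarged by multiples of $\varrho_f(4R)$, then convert the resulting diameter bound back through the quasi-isometry constants of Proposition \ref{Prop: coarse approx}. The only difference is that you handle edge interiors and partial edges explicitly via the estimate $|\hat f(x)-f(j(x))|\leq 2\varrho_f(4R)$, a detail the paper's proof passes over by considering only vertices and full edges contained in the component --- a refinement of, not a departure from, the same argument.
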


\begin{proof} It is readily seen that $\varrho_{\hat{f}}(1)\leq \varrho_f(2R)$. Since $|\Gamma(X,R,A)|$ is a geodesic space, this proves that $\hat{f}$ is bornologous.

Consider now any connected component $\mathcal{C}$ of $\hat{f}^{-1}(x-N,x+N))$. For any edge $\{v_1,v_2\}$ contained in $\mathcal{C}$ if $j(v_i)=a_i\in A$ for $i=1,2$, then $d(a_1,a_2)\leq 4R$. Since $X$ is geodesic and bornologous, $a_1,a_2$ are in the same connected component $\mathcal{D}$ of $f^{-1}(x-N-\varrho_f(4R),x+N+\varrho_f(4R))$. Then, all the vertices in $\mathcal{C}$ are contained in $\mathcal{D}$. The diameter of $\mathcal{D}$ is uniformly bounded (independently of $x$) by assumption on $f$. Therefore, since $j\co |\Gamma(X,R,A)| \to X$ is a quasi-isometry, $\hat{f}$ is metrically proper on the connected components.
\end{proof}

Next theorem follows immediately from Theorem \ref{Th: qi} together with propositions \ref{Prop: coarse approx2} and \ref{Prop: approx function}.

\begin{teorema}\label{Th: qi2} Let $X$ be a complete geodesic space with $H_1(X)$ uniformly generated. If there is a function $f\co X \to \br$ such that $f$ is bornologous and metrically proper on the connected components, then $X$ is quasi-isometric to a tree. In particular, $X$ is Gromov hyperbolic. 
\end{teorema}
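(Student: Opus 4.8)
The plan is to reduce everything to the graph case already settled in Theorem \ref{Th: qi}, by replacing $X$ with the approximating graph $\Gamma(X,R,A)$ and transporting the function $f$ along the quasi-isometry $j$. In effect the two preceding propositions have been arranged precisely so that this reduction is mechanical, so I expect no genuine obstacle; the only care required is to confirm that the hypotheses of Theorem \ref{Th: qi} are literally met by the discrete model and to chain the quasi-isometries correctly.

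First I would fix any constant $R>0$ and any maximal $R$-separated set $A\subset X$ (one exists by Zorn's lemma) and form the graph $\Gamma(X,R,A)$ with geometric realization $|\Gamma(X,R,A)|$. Here the two standing hypotheses on $X$ — completeness and uniform generation of $H_1$ — do their work simultaneously through Proposition \ref{Prop: coarse approx2}: it gives both that $|\Gamma(X,R,A)|$ is quasi-isometric to $X$ and that $H_1(\Gamma(X,R,A))$ is uniformly generated. The latter is exactly the homological input Theorem \ref{Th: qi} demands of the graph.

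Next I would pass the function to the graph by the construction preceding Proposition \ref{Prop: approx function}, namely $\hat f(v)=f(j(v))$ on vertices with affine extension across edges. Proposition \ref{Prop: approx function} then guarantees that $\hat f\co |\Gamma(X,R,A)|\to\br$ inherits the two relevant properties from $f$: it is bornologous and metrically proper on the connected components. At this point $\Gamma(X,R,A)$ is a graph with uniformly generated $H_1$ carrying a function $\hat f$ satisfying every hypothesis of Theorem \ref{Th: qi}.

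Applying Theorem \ref{Th: qi} therefore yields that $|\Gamma(X,R,A)|$ is quasi-isometric to a tree $T$. Since $X$ is quasi-isometric to $|\Gamma(X,R,A)|$ by Proposition \ref{Prop: coarse approx2} and the composition of quasi-isometries is again a quasi-isometry, it follows that $X$ is quasi-isometric to $T$, which is the main assertion. The final clause is then immediate: a tree is Gromov $0$-hyperbolic and, among geodesic metric spaces, Gromov hyperbolicity is a quasi-isometry invariant (see \cite{B-H}), so $X$ is Gromov hyperbolic.
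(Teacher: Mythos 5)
Your proposal is correct and is essentially identical to the paper's argument: the paper states that the theorem ``follows immediately from Theorem \ref{Th: qi} together with propositions \ref{Prop: coarse approx2} and \ref{Prop: approx function}'', which is precisely the reduction you carry out. You have merely made explicit the steps the paper leaves implicit (choosing $R$ and $A$, transporting $f$ to $\hat f$, and composing the quasi-isometries), all of which are the intended routine verifications.
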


If a metric space is Gromov hyperbolic then $H_1(X)$ is uniformly generated. See, for example, the proof of Corollary 1.5 in \cite{F-W}.

\begin{cor}\label{Cor: qi2} Let $X$ be a complete Gromov hyperbolic space. If
$f\co X \to \br$ is a bornologous function and $f$ is metrically proper on the connected components, then $X$ is quasi-isometric to a tree.
\end{cor}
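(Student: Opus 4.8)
The plan is to reduce Corollary \ref{Cor: qi2} directly to Theorem \ref{Th: qi2}, since the latter already delivers exactly the desired conclusion (quasi-isometry to a tree) from three hypotheses: that $X$ is a complete geodesic space, that $H_1(X)$ is uniformly generated, and that $f$ is bornologous and metrically proper on the connected components. The corollary gives us a complete Gromov hyperbolic space $X$ together with such an $f$, so all that is genuinely needed is to verify that the Gromov hyperbolicity hypothesis supplies the one ingredient that Theorem \ref{Th: qi2} requires but the corollary does not explicitly state, namely the uniform generation of $H_1(X)$. The completeness and geodesic hypotheses, as well as the two properties of $f$, transfer verbatim from the corollary to the theorem.

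First I would invoke the fact recalled immediately before the corollary: if a metric space is Gromov hyperbolic, then $H_1(X)$ is uniformly generated, with the reference to the proof of Corollary 1.5 in \cite{F-W}. The underlying reason is that in a $\delta$-hyperbolic geodesic space every loop can be filled by a disk built from geodesic triangles, and the thin-triangles condition lets one decompose any cycle into loops of length bounded by a universal constant depending only on $\delta$; this is precisely the statement that $H_1$ is generated by loops of uniformly bounded length. I would state this as the single nontrivial observation and cite it rather than reprove it, since it is standard and explicitly referenced in the surrounding text.

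Having established that $H_1(X)$ is uniformly generated, I would then simply apply Theorem \ref{Th: qi2} to $X$ and $f$: the space $X$ is complete and geodesic by hypothesis, $H_1(X)$ is uniformly generated by the preceding step, and $f$ is bornologous and metrically proper on the connected components by hypothesis. The theorem's conclusion is that $X$ is quasi-isometric to a tree, which is exactly the assertion of the corollary. No further construction or estimate is required.

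There is no real obstacle here, since the corollary is an immediate specialization: the only conceptual point is recognizing that ``Gromov hyperbolic'' is a strictly stronger hypothesis than ``$H_1$ uniformly generated,'' so that the corollary genuinely follows from the theorem with strictly fewer assumptions to verify. If anything, the sole place demanding care is making sure the implication \emph{hyperbolic} $\Rightarrow$ \emph{uniformly generated $H_1$} is correctly attributed and that completeness is not secretly needed for it; both are handled by pointing to \cite{F-W}.
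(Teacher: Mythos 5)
Your proposal is correct and matches the paper's own (implicit) proof exactly: the paper dispatches the corollary by the remark immediately preceding it, that Gromov hyperbolicity implies uniformly generated $H_1$ (citing the proof of Corollary 1.5 in \cite{F-W}), and then applies Theorem \ref{Th: qi2}, which is precisely your reduction. The only point worth noting is that when you say $X$ is ``geodesic by hypothesis,'' this is licensed by the paper's convention of defining Gromov hyperbolicity only for geodesic spaces, so no extra assumption is being smuggled in.
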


\section{Characterizations for a metric space to be quasi-isometric to a tree.}\label{Section: carac}

\begin{teorema}\cite[Theorem 4.6]{Man} Let $X$ be a geodesic metric space. The following are equivalent:
\begin{itemize}\item[(1)] $X$ is quasi-isometric to some simplicial tree $\Gamma$.
\item[(2)] (Bottleneck Property) There is some $\Delta>0$ so that for all $x,y \in Y$ there is a midpoint $m=m(x,y)$ with $d(x,m)=d(y,m)=\frac{1}{2}d(x,y)$ and the property that any path from $x$ to $y$ must pass within less than $\Delta$ of the point $m$.
\end{itemize}
\end{teorema}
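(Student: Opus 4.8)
The plan is to prove the two implications separately, transporting the bottleneck property across a quasi-isometry for the easy direction and reducing the hard direction to Theorem \ref{Th: qi2}.

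For (1)$\Rightarrow$(2) I would transport the bottleneck property from the tree, where it holds trivially. Let $\phi\co X\to\Gamma$ be a quasi-isometry onto a simplicial tree with constants $\lambda,C$. In $\Gamma$ any path between two points $u,v$ must traverse the whole unique geodesic $[u,v]$, since deleting any interior point separates $u$ from $v$; thus the bottleneck property holds in $\Gamma$ with constant $0$. Given $x,y\in X$, choose a geodesic $[x,y]$ and its midpoint $m$. For an arbitrary path $\gamma$ from $x$ to $y$, its image $\phi\circ\gamma$ is a discrete path which I connect up to an honest path $\gamma'$ in $\Gamma$ from $\phi(x)$ to $\phi(y)$ at bounded distance. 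Since $\phi(m)$ lies uniformly close to the tree geodesic $[\phi(x),\phi(y)]$ and is coarsely its midpoint (stability of quasigeodesics in a tree), and $\gamma'$ must pass through that geodesic, $\phi\circ\gamma$ passes within a constant depending on $\lambda,C$ of $\phi(m)$; pulling this back through the lower quasi-isometry inequality yields a point of $\gamma$ within $\Delta:=\Delta(\lambda,C)$ of $m$.

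For (2)$\Rightarrow$(1) I would not build a tree by hand but instead verify the hypotheses of Theorem \ref{Th: qi2} for $f(x)=d(o,x)$, where $o\in X$ is a fixed basepoint. This $f$ is $1$-Lipschitz, hence bornologous. Two further inputs are needed: that $X$ has uniformly generated $H_1$, and that $f$ is metrically proper on the connected components. For the first, I would first show that the bottleneck property implies $X$ is Gromov hyperbolic, i.e. geodesic triangles are uniformly thin — applying the bottleneck to the detour path $[x,z]*[z,y]$ around the third vertex forces the midpoint, and after subdivision every point, of $[x,y]$ to lie near $[x,z]\cup[z,y]$ — and then invoke the remark in the excerpt that a Gromov hyperbolic space has uniformly generated $H_1$. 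The heart of the argument is metric-properness on components. Fix $[a,b]$ with $b-a=2N$ and a connected component $\mathcal C$ of $f^{-1}[a,b]$; take $p,q\in\mathcal C$ joined by a path $\sigma\subseteq\mathcal C$ and let $m=m(p,q)$ be the bottleneck midpoint, so $d(p,m)=d(q,m)=\rho$ with $2\rho=d(p,q)$. Since $\sigma$ stays in the annulus and passes within $\Delta$ of $m$, one gets $d(o,m)\ge a-\Delta$. On the other hand, thinness of the triangle $(o,p,q)$ places $m$ within $O(\delta)$ of $[o,p]\cup[o,q]$, giving $d(o,m)\le (p\cdot q)_o+O(\delta)\le b-\rho+O(\delta)$. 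Comparing the two bounds yields $\rho\le 2N+\Delta+O(\delta)$, a bound independent of the location $[a,b]$; hence $\operatorname{diam}\mathcal C$ is uniformly bounded, and Theorem \ref{Th: qi2} applies.

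The main obstacle is the hard direction, specifically the two places where the strength of the bottleneck over mere hyperbolicity is actually spent. First, the upper bound $d(o,m)\le (p\cdot q)_o+O(\delta)$ genuinely needs hyperbolicity: a purely metric estimate only gives $\rho\le O(b)$, which blows up as the annulus recedes from $o$ and is useless, so one cannot shortcut past establishing that the bottleneck property implies hyperbolicity. Second, proving bottleneck$\Rightarrow$thin triangles is itself delicate, since midpoints in a general geodesic space need not be unique and a naive dyadic iteration of the bottleneck accumulates an error growing like $\Delta\log d(x,y)$; one must show that all midpoints of a segment are uniformly close, or fix a preferred geodesic bicombing, so that the thinness constant stays bounded. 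Handling these two points carefully is what makes the argument work.
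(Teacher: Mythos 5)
The paper itself gives no proof of this statement: it is quoted verbatim from Manning \cite[Theorem 4.6]{Man}, whose original argument for (2)$\Rightarrow$(1) constructs the tree directly. Your reduction of the hard direction to Theorem \ref{Th: qi2} with $f(x)=d(o,x)$ is therefore a genuinely different route, and a natural one in the context of this paper; it is also non-circular, since the proofs of Theorems \ref{Th: qi} and \ref{Th: qi2} nowhere use the Bottleneck theorem. Your (1)$\Rightarrow$(2) is the standard quasi-isometry transport and is fine. Two genuine problems remain in (2)$\Rightarrow$(1). First, Theorem \ref{Th: qi2} assumes $X$ \emph{complete}, a hypothesis absent from Manning's statement and not available to you; the fix is to bypass \ref{Th: qi2} and apply the graph version, Theorem \ref{Th: qi}, to $\Gamma(X,R,A)$: Proposition \ref{Prop: coarse approx} uses no completeness, bottleneck $\Rightarrow$ hyperbolic passes to the quasi-isometric graph, and hyperbolicity gives uniformly generated $H_1$ by the remark after Theorem \ref{Th: qi2}. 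Second, your inequality $d(o,m)\leq (p\cdot q)_o+O(\delta)$ is false as stated --- the inequality runs the other way, since $m\in[p,q]$ gives $d(o,m)\geq d(o,[p,q])\geq (p\cdot q)_o$. The bound you actually need does follow from the thinness you invoke: $m$ is $\delta$-close to some $z$ on, say, $[o,p]$; since $d(p,m)=\rho$ we get $d(p,z)\geq \rho-\delta$, hence $d(o,m)\leq d(o,z)+\delta=d(o,p)-d(p,z)+\delta\leq b-\rho+2\delta$. (A cosmetic point: components of the closed set $f^{-1}[a,b]$ need not be path-connected; work instead with $f^{-1}\bigl((a-\epsilon,b+\epsilon)\bigr)$, whose components are open and hence path-connected in a geodesic space.)

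Your ``main obstacle'' analysis is also miscalibrated in two ways, both in your favor. The annulus estimate does \emph{not} require hyperbolicity: apply the bottleneck property to the path $[p,o]\ast[o,q]$ itself. It passes within $\Delta$ of $m=m(p,q)$ at some point $w$, say $w\in[o,p]$; then $d(p,w)\geq d(p,m)-\Delta=\rho-\Delta$, so $d(o,m)\leq d(o,w)+\Delta= d(o,p)-d(p,w)+\Delta\leq b-\rho+2\Delta$, and comparing with $d(o,m)\geq a-\Delta$ gives $\rho\leq 2N+3\Delta$ with no $\delta$ at all. (Hyperbolicity is still wanted for uniformly generated $H_1$, so the bottleneck $\Rightarrow$ hyperbolic step stays in your plan, just not here.) And for that step, the logarithmic accumulation you fear is avoidable exactly along the line you suggest: any two exact midpoints $m,m'$ of $x,y$ satisfy $d(m,m')\leq 2\Delta$ (apply the bottleneck to the path $[x,m']\ast[m',y]$), and every point $p$ of a geodesic $\gamma=[x,y]$ with $d(p,\{x,y\})\geq 4\Delta$ is an exact midpoint of a subsegment $[x',y']\subset\gamma$ with $d(x',p)=d(y',p)=4\Delta$; extending an arbitrary path $\sigma$ from $x$ to $y$ by the two tails of $\gamma$ gives a path from $x'$ to $y'$, and the point of it lying within $\Delta$ of $m(x',y')$, hence within $3\Delta$ of $p$, cannot lie on the tails, since those stay at distance $\geq 4\Delta-\Delta$ from $p$. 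So $\sigma$ itself passes within $3\Delta$ of \emph{every} point of $\gamma$; applying this to $\sigma=[x,z]\ast[z,y]$ yields thin triangles with $\delta=O(\Delta)$ in one step, with no dyadic iteration and no choice of bicombing. With these repairs your outline closes.
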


If $X$ is a set and $X=\cup_i O_i$ a covering, the \emph{multiplicity} of the covering is at most $n$ if any point $x\in A$ is contained in at most $n$ elements of $\{O_i\}$. For $D>0$, the $D$--multiplicity of the covering is at most $n$ if for any $x\in X$, the closed $D$--ball intersects at most $n$ elements of $\{O_i\}$. The \emph{asymptotic dimension} of the metric space $X$ is at most $n$ if for any $D\geq 0$ there exist a covering $X=\cup_i O_i$ such that the diameter of $O_i$ is uniformly bounded (i.e. there is some $C>0$ such that $diam(O_i)\leq C$ for every $i$) and the $D$--multiplicity of the covering is at most $n+1$. The asymptotic dimension of $X$ is $n$, $asdim(X)=n$, if the asymptotic dimension is at most $n$, but it is not at most $n-1$.

The following is due to M. Cendelj et al. in \cite{D1}. In that work they present a combinatorial approach to coarse geometry unsing direct sequences. The spirit is pretty much the same of the inverse system approach to shape theory (see \cite{Dydak-Segal} and \cite{MS1}).

Let $X$ be a metric space and 
$R \in \br_+$. Then the $R$-Rips complex Rips$_R(X)$ is the
simplicial complex whose vertices are points of $X$; vertices $x_1,..., x_n$ span a simplex iff $d(x_i, x_j)\leq R$
for each $i,j$.

For each pair $0 \leq r \leq R < \infty$ there is a natural simplicial embedding $$\iota_{r,R} \co \mbox{Rips}_r(X) \to \mbox{Rips}_R(X),$$ so that $\iota_{r,\rho}=\iota_{R,\rho} \circ \iota_{r,R}$ provided that $r \leq R \leq \rho$.

\begin{definicion}\cite[Definition 2.10]{K} A metric space $X$ is \emph{coarsely} $k$-\emph{connected} if for each $r$ there exist $R>r$ so that the mapping $|Rips_r(X)|\to |Rips_R(X)|$ induces a trivial map of $\pi_i$ for $0\leq i \leq k$.
\end{definicion}

A metric space $X$ is coarsely homology $n$-\emph{connected} if for each $r$ there exist $R>r$ so that the mapping $|Rips_r(X)|\to |Rips_R(X)|$ induces a trivial map of reduced homology groups $\tilde{H}_i$ for $0\leq i \leq k$.

\begin{teorema}\cite[Theorem 7.1]{D1} If $X$ is a coarsely geodesic metric space, then the following conditions are equivalent:
\begin{itemize}\item[(1)] $X$ is coarsely equivalent to a simplicial tree,
\item[(2)] $asdim(X)\leq 1$ and $X$ is coarsely homology 1-connected,  
\item[(3)] $asdim(X)\leq 1$ and $X$ is coarsely 1-connected.  
\end{itemize}
\end{teorema}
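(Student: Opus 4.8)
The plan is to prove the cycle $(1)\Rightarrow(3)\Rightarrow(2)\Rightarrow(1)$, organized throughout around the direct sequence of Rips complexes $\{|\mathrm{Rips}_r(X)|\}_r$ together with the canonical bonding maps $\iota_{r,R}$; for a coarsely geodesic space this sequence is a cofinal anti-\v Cech approximation whose ind-homotopy and ind-homology type record exactly the coarse invariants in the statement. For $(1)\Rightarrow(3)$ I would note that both $\mathrm{asdim}\le 1$ and coarse $1$-connectivity are preserved by coarse equivalence, so it suffices to verify $(3)$ on a simplicial tree $\Gamma$: a tree has $\mathrm{asdim}\le 1$, and being geodesic and $0$-hyperbolic its complex $|\mathrm{Rips}_R(\Gamma)|$ is contractible once $R$ is large, so each bonding map $\iota_{r,R}$ factors through a contractible space and is null on $\pi_0$ and $\pi_1$. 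The implication $(3)\Rightarrow(2)$ is then soft: by naturality of the Hurewicz homomorphism a map that is trivial on $\pi_0$ and $\pi_1$ is trivial on $\tilde H_0$ and $H_1=\tilde H_1$, and since $\mathrm{asdim}\le 1$ is common to both conditions, coarse $1$-connectivity forces coarse homology $1$-connectivity.

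The implication $(2)\Rightarrow(1)$ is the crux and carries all the content. First I would use $\mathrm{asdim}(X)\le 1$ to replace the Rips sequence by a cofinal direct sequence of \emph{graphs}: at each scale asymptotic dimension at most $1$ produces a uniformly bounded cover of multiplicity $2$, whose nerve is $1$-dimensional, and these nerves assemble into a direct sequence $Y_1\to Y_2\to\cdots$ ind-equivalent, hence coarsely equivalent, to $X$. Read through this $1$-dimensional model, coarse homology $1$-connectivity says the sequence is \emph{ind-acyclic}: it is coarsely connected and every $1$-cycle of $Y_i$ dies in some later $Y_j$. It then remains to manufacture from this data an honest simplicial tree coarsely equivalent to the mapping telescope of $\{Y_i\}$. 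The natural idea is that a $\pi_1$-trivial bonding map of graphs lifts to the universal cover of its target, which is a tree; threading such lifts together would realize the whole system by trees and identify the telescope, up to coarse equivalence, with a genuine tree $\Gamma$.

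The main obstacle is exactly this realization step, and it has two intertwined difficulties. First, one is handed only \emph{homological} vanishing of $1$-cycles, whereas a tree is a \emph{$\pi_1$-trivial} object; for an individual homomorphism of free groups, triviality on $H_1$ is strictly weaker than triviality on $\pi_1$ (witness an endomorphism of $F_2$ landing in the commutator subgroup yet injective), so the promotion cannot be purely algebraic and must exploit that the bonding maps are not arbitrary but come from refinements of uniformly bounded covers witnessing $\mathrm{asdim}\le 1$. Second, the ``universal cover is a tree'' heuristic lives in the topological category, so the lifts and the collapsing of $1$-cycles must be carried out with \emph{uniformly bounded displacement} if the equivalence is to survive in the coarse category. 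Controlling these two tensions simultaneously is where the direct-sequence machinery of \cite{D1} does the real work, and where any naive point-set or purely algebraic argument breaks down.
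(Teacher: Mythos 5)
This statement is not proved in the paper at all: it is quoted verbatim, with attribution, as \cite[Theorem 7.1]{D1} (Cencelj--Dydak--Vavpeti\v{c}--Virk), and the paper uses it as a black box. So there is no internal proof to compare against, and your proposal has to stand on its own as a proof of the cited result. It does not: it is an honest and well-informed sketch whose two easy implications are fine in outline, but whose crux is explicitly left open. The steps $(1)\Rightarrow(3)$ (coarse invariance of both conditions, plus contractibility of large-scale Rips complexes of a $0$-hyperbolic geodesic space) and $(3)\Rightarrow(2)$ (naturality of Hurewicz, applied componentwise since Rips complexes need not be connected) are correct and essentially routine.

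The gap is $(2)\Rightarrow(1)$, which you yourself identify as carrying all the content and then do not carry out. Your plan --- replace the Rips system by a cofinal system of graphs using multiplicity-$2$ covers from $asdim(X)\leq 1$, then lift through universal covers of the targets --- breaks at exactly the two points you name, and naming them does not repair them. First, the lifting criterion requires the bonding map to be trivial on $\pi_1$, but hypothesis $(2)$ only gives triviality on $H_1$; for maps of graphs this is genuinely weaker (your $F_2$ example: an injective endomorphism landing in the commutator subgroup), so at no single stage does a lift to the tree $\widetilde{Y}_j$ exist, and composing $H_1$-trivial bonding maps never algebraically forces $\pi_1$-triviality, since commutator subgroups map to commutator subgroups. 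Promoting the homological hypothesis to a homotopical one must use the geometry of the nerve maps (uniformly bounded covers, controlled displacement), and that promotion is precisely the content of the equivalence $(2)\Leftrightarrow(3)$ in \cite{D1}; you cannot defer it to ``the direct-sequence machinery of \cite{D1}'' inside a proof of the theorem from \cite{D1} without circularity. Second, even granting $\pi_1$-trivial bonding maps, lifts to universal covers come with no bounded-displacement control (a lift can move points along a long unwrapped loop arbitrarily far), and the coarse type of a mapping telescope depends on the choice of metric on it, so the identification of the telescope with a simplicial tree coarsely equivalent to $X$ is a further unproved step. As written, your argument establishes only the soft implications $(1)\Rightarrow(3)\Rightarrow(2)$ and leaves the theorem's substance unproven.
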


Which implies the following, 

\begin{teorema}\cite[Theorem 1.1]{F-W} Let $X$ be a geodesic metric space with $H_1(X)$ uniformly generated. If
$X$ has asymptotic dimension one then $X$ is quasi-isometric to an unbounded tree.
\end{teorema}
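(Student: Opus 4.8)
The plan is to obtain this as a consequence of \cite[Theorem 7.1]{D1} by checking its condition (2), and then to upgrade the resulting coarse equivalence to a quasi-isometry. Since a geodesic space is in particular coarsely geodesic, \cite[Theorem 7.1]{D1} is applicable, and $asdim(X)\leq 1$ holds by hypothesis. First I would dispose of the trivial part of ``coarsely homology $1$-connected'': as $X$ is geodesic it is connected, so each $|\mathrm{Rips}_r(X)|$ is connected (two points are joined by a geodesic sampled in steps of length $<r$) and $\tilde{H}_0$ vanishes identically, whence the induced maps on $\tilde{H}_0$ are trivial. Granting also the triviality on $\tilde{H}_1$ discussed below, \cite[Theorem 7.1]{D1} yields that $X$ is coarsely equivalent to a simplicial tree $T$. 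Since $X$ and $T$ are geodesic, coarse equivalence is the same as quasi-isometry (as recalled in Section \ref{section 2}, see \cite{Roe1}), so $X$ is quasi-isometric to $T$; and because every space here is unbounded and quasi-isometries preserve boundedness, $T$ is an unbounded tree.

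The substantive point is the triviality of the direct-limit maps on $\tilde{H}_1$, and this is exactly where the hypothesis that $H_1(X)$ is uniformly generated is used. Let $L$ bound the lengths of a generating family of loops. Fix $r>0$; I would then look for a suitable $R>r$ (of the form $R=\max\{r,L\}$, enlarged by a controlled constant) killing the image of $H_1(|\mathrm{Rips}_r(X)|)$. Given a $1$-cycle $z$ in $|\mathrm{Rips}_r(X)|$, replace each $1$-simplex $[x,y]$, with $d(x,y)\leq r$, by a geodesic segment $[x,y]$ in $X$; this produces a singular $1$-cycle $c$ in $X$ whose class decomposes as $[c]=\sum_i n_i[\gamma_i]$, with each $\gamma_i$ a loop of length at most $L$.

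It remains to transport this decomposition back into the Rips complexes. A loop $\gamma_i$ of length at most $L$ has diameter at most $L/2$, so sampling it in steps of length $<r$ gives a cycle whose vertices lie pairwise within $R$ of one another; these vertices span a simplex of $\mathrm{Rips}_R(X)$, in which the sampled cycle bounds, and the finitely many $2$-chains realizing the relations $[c]=\sum_i n_i[\gamma_i]$ in $X$ are likewise filled in $\mathrm{Rips}_R(X)$ once $R$ is large enough. Hence $z$ is null-homologous in $|\mathrm{Rips}_R(X)|$, which is the remaining part of condition (2) of \cite[Theorem 7.1]{D1}. The hard part is precisely this comparison between the singular homology of the geodesic space $X$ and the simplicial homology of its Rips complexes: one must verify that the geodesic-sampling assignment is a chain map up to a controlled chain homotopy, so that the single bound $L$ on the generating loops converts into a uniform enlargement $r\mapsto R$ of the Rips parameter depending only on $r$ and $L$ (not on the individual cycle $z$). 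Once this bookkeeping is in place all three hypotheses of \cite[Theorem 7.1]{D1} are verified and the theorem follows.
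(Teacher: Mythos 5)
Your proposal is correct and takes essentially the same route as the paper: the paper offers no independent proof, presenting this statement as an immediate consequence of \cite[Theorem 7.1]{D1} (the ``Which implies the following'' line), which is exactly your derivation via condition (2) of that theorem plus the geodesic-spaces fact that coarse equivalence upgrades to quasi-isometry. The bridging step you spell out --- that uniform generation of $H_1$ by loops of length at most $L$ makes a geodesic space coarsely homology $1$-connected, with $R$ depending only on $r$ and $L$ via the standard subdivision/sampling comparison between singular chains and Rips complexes --- is left implicit in the paper, and your sketch of it is sound.
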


Now, by Theorem \ref{Th: qi2},

\begin{cor}\label{cor: qi} Let $X$ be a complete geodesic space with uniformly generated $H_1$. If $f\co X \to \br$ is bornologous and metrically proper on the connected components, then $X$ holds the following equivalent conditions:
\begin{itemize}\item[(1)] $X$ is quasi-isometric to a tree,
\item[(2)] $X$ has bottleneck property,
\item[(3)] $asdim(X)\leq 1$ and $X$ is coarsely homology 1-connected,  
\item[(4)] $asdim(X)\leq 1$ and $X$ is coarsely 1-connected,
\item[(5)] $asdim(X)\leq 1$ and $H_1(X)$ is uniformly generated,
\end{itemize}
\end{cor}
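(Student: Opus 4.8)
The plan is to prove Corollary \ref{cor: qi} by establishing that the five listed conditions are equivalent \emph{in general} for the class of spaces under consideration, and then invoking Theorem \ref{Th: qi2} to certify that condition (1) actually holds, which forces all of them to hold. First I would observe that the equivalence of (1)--(4) is essentially already available: Manning's theorem gives (1) $\Leftrightarrow$ (2), and Theorem 7.1 of \cite{D1} gives that for a coarsely geodesic space, coarse equivalence to a simplicial tree is equivalent to each of the two asymptotic-dimension statements in (3) and (4). Since $X$ is a complete geodesic space, it is in particular coarsely geodesic, and coarse equivalence to a tree is the same as quasi-isometry to a tree (both spaces being geodesic, as noted in the excerpt). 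Thus the backbone $(1)\Leftrightarrow(2)\Leftrightarrow(3)\Leftrightarrow(4)$ comes directly from the two cited theorems, and the only genuinely new bookkeeping is to fold condition (5) into this chain.

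For condition (5), I would argue $(1)\Rightarrow(5)\Rightarrow(1)$, thereby inserting it into the established cycle. The implication $(5)\Rightarrow(1)$ is immediate from Theorem 1.1 of \cite{F-W} quoted just above the corollary: a geodesic space with uniformly generated $H_1$ and asymptotic dimension one is quasi-isometric to an unbounded tree. For $(1)\Rightarrow(5)$, I would note that a tree has asymptotic dimension at most one, and that asymptotic dimension is a quasi-isometry invariant, so quasi-isometry to a tree yields $\mathrm{asdim}(X)\leq 1$; uniform generation of $H_1(X)$ is part of the standing hypothesis of the corollary (and is in any case a quasi-isometry invariant by Lemma \ref{Lemma 2.3 FW}, and holds for trees trivially since $H_1$ of a tree vanishes). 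Combining, (5) sits inside the equivalence class.

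Finally, the crucial point is that the corollary does not merely assert an equivalence but asserts that $X$ \emph{actually satisfies} these conditions. This is where the hypotheses on $f$ enter: under the assumption that $f\co X\to\br$ is bornologous and metrically proper on the connected components, Theorem \ref{Th: qi2} states directly that $X$ is quasi-isometric to a tree, which is condition (1). Having shown $(1)\Leftrightarrow(2)\Leftrightarrow(3)\Leftrightarrow(4)\Leftrightarrow(5)$, the verified instance of (1) propagates to all the others.

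The main obstacle I anticipate is not conceptual but one of matching hypotheses between the cited sources. Theorem 7.1 of \cite{D1} is stated for \emph{coarsely geodesic} spaces and phrases the conclusion as coarse equivalence to a tree rather than quasi-isometry, so I would need to check carefully that completeness plus geodesic implies the coarsely geodesic hypothesis and that the coarse-versus-quasi-isometric distinction genuinely collapses here. Likewise I should confirm that ``coarsely $1$-connected'' and ``coarsely homology $1$-connected'' in the $\mathrm{Rips}$-complex sense used by \cite{D1} are the intended readings of conditions (3) and (4); once these identifications are pinned down, the remaining implications are routine invocations of the quoted theorems.
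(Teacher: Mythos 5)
Your proposal is correct and takes essentially the same route as the paper, whose entire justification for this corollary is the phrase ``Now, by Theorem \ref{Th: qi2},'' with the equivalences (1)--(5) understood as compiled from the quoted results of Manning, Cencelj et al.\ and Fujiwara--Whyte in that section. Your explicit bookkeeping (inserting (5) into the cycle via Theorem 1.1 of \cite{F-W} and quasi-isometry invariance of asymptotic dimension, and noting that geodesic plus coarse equivalence collapses to quasi-isometry) just spells out what the paper leaves implicit.
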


\section{Hyperbolic approximation.}\label{Section: Hyp ap}

Let us recall here the construction of the hyperbolic approximation
introduced in \cite{BS}.

A \emph{hyperbolic approximation} of a metric space $Z$ is a graph
$X=\mathcal{H}(Z)$ which is defined as follows. Fix a positive $r\leq
\frac{1}{6}$ which is called the \emph{parameter} of $X$. For
every $k\in \mathbb{Z}$, let $A_k\in Z$ be a maximal
$r^k$-separated set. For every $v\in A_k$, consider the ball
$B(v)\subset Z$ of radius $r(v):=2r^{k}$ centered at $v$. Let $V_k$ be the 
set of balls $B(v)$, $v\in A_k$ and $V$ the union, for $k\in
\mathbb{Z}$, of $V_k$. Therefore, if
for any $v,v'\in A_k$, $B(v)=B(v')$, they represent the same point
in $V$, but if $B(v_k)=B(v_{k'})$ with $k\neq k'$, then they yield
different points in $V$. Let $V$ be the vertex set of a graph $X$.
Vertices $v,v'$ are connected by an edge if and only if they
either belong to the same level, $V_k$, and the close balls
$\bar{B}(v),\bar{B}(v')$ intersect, $\bar{B}(v)\cap
\bar{B}(v')\neq \emptyset$, or they lie on neighboring levels
$V_k,V_k+1$ and the ball of the upper level, $V_{k+1}$, is
contained in the ball of the lower level, $V_k$.

An edge $vv'\subset X$ is called \emph{horizontal}, if its
vertices belong to the same level, $v,v'\in V_k$ for some $k\in
\mathbb{Z}$. Other edges are called \emph{radial}. Consider the
path metric on $X$ for which every edge has length 1. $|vv'|$
denotes the distance between points $v,v'\in V$ in $X$, while
$d(v,v')$ denotes the distance between them in $Z$. There is a
natural level function $l:V \to \mathbb{Z}$ defined by $l(v)=k$
for $v\in V_k$. Consider also the canonical extension $l:X \to
\mathbb{R}$.






A hyperbolic approximation of any metric space is a Gromov hyperbolic space. As we mentioned at the end of section \ref{section 2}, this implies that the first homology group is uniformly generated.

\begin{nota}\label{nota: tree} If $Z$ is an ultrametric space then if two balls intersect, one is contained in the other. Therefore, there are no horizontal edges and $\mathcal{H}(X)$ is a tree.
\end{nota}

It is well known the correspondence between trees and ultrametric spaces. By choosing a root on an $\br$--tree the boundary at infinity naturally becomes a complete ultrametric space. In fact, several categorical equivalences are proved in the literature, see \cite{Hug} and \cite{M-M}.

It is also known that a quasi-isometry between Gromov hyperbolic spaces induces 
between their ultrametric end spaces a homeomorphism with the property of being power quasi-symmetric, or PQ-symmetric. See \cite[Theorem 5.2.15]{BS}. As a converse of this, let us recall the following.

\begin{teorema}\label{tma: M}\cite[Theorem 4.14]{M} For any PQ-symmetric homeomorphism $f:Z\to Z'$ of complete
metric spaces, there is a quasi-isometry of their hyperbolic
approximations $F: X \to X'$ which induces $f$, $\partial_\infty
F(z)=f(z) \, \forall z\in Z$. 
\end{teorema}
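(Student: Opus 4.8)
The plan is to lift $f$ directly on the vertex sets of the two hyperbolic approximations $X=\mathcal{H}(Z)$ and $X'=\mathcal{H}(Z')$, using that for a complete metric space one has a canonical identification $\partial_\infty X\cong Z$: a point $z\in Z$ is the limit of any descending sequence of vertices $v_k\in V_k$ with $z\in B(v_k)$. Since $f$ is a homeomorphism, $f^{-1}$ is again PQ-symmetric, with control function of the same power type, so the construction will be symmetric in $Z$ and $Z'$, which later gives coarse density of the image almost for free.

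I would define $F$ on vertices as follows. Let $v\in V_k$, with center $v\in A_k$ and ball $B(v)=B(v,2r^k)\subset Z$. The set $f(B(v))\subset Z'$ has a well-defined diameter, and the crucial step is to declare the target level $k'=k'(v)$ by $2r^{k'}\asymp \operatorname{diam} f(B(v))$, i.e. $k'=\lfloor\log_r(\operatorname{diam} f(B(v))/2)\rfloor$. Then pick $F(v)\in V'_{k'}$ whose center $w\in A'_{k'}$ satisfies $d'(w,f(v))\le r^{k'}$, which is possible since $A'_{k'}$ is a maximal $r^{k'}$-separated set. Extend $F$ over each edge by a geodesic of $X'$ joining the images of its endpoints. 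By construction $B(F(v))$ is comparable to $f(B(v))$, hence shrinks to $f(z)$ along any descending sequence of vertices converging to $z\in Z$, which is what yields $\partial_\infty F=f$.

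To prove that $F$ is a quasi-isometry I would use the standard distance estimate in a hyperbolic approximation: for vertices $v,w$ the graph distance $|vw|$ equals, up to a uniform additive constant, $(l(v)-m)+(l(w)-m)$, where $m$ is the largest level carrying a ball that coarsely contains both $B(v)$ and $B(w)$, and $m$ is comparable to $\log_r d(v,w)$ once the centers are far apart relative to the ball radii; these are the Gromov-product estimates of \cite{BS}. Thus $|vw|$ is controlled by $l(v),l(w)$ and $\log_r d(v,w)$. Power-quasisymmetry is exactly what makes both ingredients transform affinely: the assignment $k\mapsto k'(v)$ oscillates boundedly about a fixed linear function of $k$ whose slope is governed by $p$, while $\log_r d'(f(x),f(y))$ is squeezed between two affine functions of $\log_r d(x,y)$ with slopes controlled by $p$ and $1/p$ and bounded error, because $\eta(t)=q\max\{t^p,t^{1/p}\}$. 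Feeding this into the distance formula produces constants $\lambda\ge 1$, $C>0$ with $\frac{1}{\lambda}|vw|-C\le |F(v)F(w)|\le \lambda|vw|+C$ for all vertices, and then for all of $X$ by the geodesic extension on edges. Coarse density of $F(X)$ follows by running the same construction for the PQ-symmetric map $f^{-1}$ and checking that the two resulting maps are coarse inverses; $\partial_\infty F=f$ has already been arranged.

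The main obstacle is the uniformity required to convert the pointwise quasisymmetric control into global, level-respecting quasi-isometry estimates. One must show that the target levels $k'(v)$ attached to vertices of a fixed level $k$ oscillate only within a bounded range and follow a single linear function of $k$; for a merely quasisymmetric $f$ this may fail, and it is precisely the power form of $\eta$, uniform in base point and scale, that forces the image diameters of all radius-$2r^k$ balls to be mutually comparable up to a fixed power, pinning the levels together. The second delicate point, and the reason completeness is assumed, is that the level index ranges over all of $\bz$: in contrast to the bounded setting of \cite{BS}, one must keep the estimates uniform as $k\to-\infty$ (arbitrarily large balls) as well as $k\to+\infty$, where completeness guarantees that descending rays really converge in $Z$, so that $\partial_\infty X\cong Z$ and the induced boundary map is defined on all of $Z$.
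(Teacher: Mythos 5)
This statement is quoted from \cite{M} and the paper reproduces no proof of it, so your proposal can only be measured against the argument in \cite{M}, which (extending \cite[Theorem 7.2.1]{BS} to unbounded spaces) does use your overall scheme: assign to $v\in V_k$ a target level governed by the size of $f(B(v))$, pick a nearby vertex of $X'$ at that level, and verify the quasi-isometry inequalities through the standard distance formula $|vw|\doteq l(v)+l(w)-2\log_r\max\{d(v,w),r^{l(v)},r^{l(w)}\}$. The skeleton of your construction is therefore the right one. But the quantitative step by which you feed PQ-symmetry into that formula contains a genuine error, and it sits exactly at the point where the unbounded case differs from \cite{BS}. You claim (a) that $\log_r d'(f(x),f(y))$ is squeezed between two affine functions of $\log_r d(x,y)$ with bounded error, and (b) that the target levels $k'(v)$ of all vertices $v\in V_k$ oscillate boundedly about a single linear function of $k$. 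Both are false: quasi-symmetry controls only \emph{ratios} of distances sharing a basepoint, never absolute distances. The PQ-symmetric homeomorphism $f(x)=x|x|$ of $\mathbb{R}$ is a counterexample to both: pairs at distance $1$ near the origin map to pairs at distance about $1$, while pairs at distance $1$ near $x=R$ map to pairs at distance about $2R$; correspondingly $\operatorname{diam} f(B(x,2r^k))$ ranges from $\sim r^{2k}$ at $x=0$ to $\sim r^{k-j}$ at $x=r^{-j}$, so $k'(v)$ on a single level $V_k$ is unbounded below and no ``pinning of levels'' occurs. Any attempt to prove your claimed uniformity would fail, so as written the verification that $F$ is a quasi-isometry does not go through.

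The repair --- and what the actual proof does --- is to make all estimates \emph{pairwise and relative}: for two vertices $v,w$ with centers $a_v,a_w$, apply the quasi-symmetry inequality with basepoint $a_v$ and comparison point $a_w$ to bound the ratio $\operatorname{diam} f(B(v))/D'$ in terms of $\eta(2r^{k}/D)$, where $D=\max\{d(a_v,a_w),r^{l(v)},r^{l(w)}\}$ and $D'$ is the analogous quantity downstairs; since $\eta(t)=q\max\{t^p,t^{1/p}\}$, taking $\log_r$ gives $\frac{1}{p}\bigl(l(v)-\log_r D\bigr)-c\le k'(v)-\log_r D'\le p\bigl(l(v)-\log_r D\bigr)+c$, i.e.\ quasi-preservation of the relative depths (equivalently, of Gromov products), and summing the two such inequalities against the distance formula yields $\frac{1}{p}|vw|-C\le |F(v)F(w)|\le p|vw|+C$ with no global level function needed. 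Two smaller points: your formula $k'=\lfloor \log_r(\operatorname{diam} f(B(v))/2)\rfloor$ is undefined when $B(v)$ is a singleton (completeness does not exclude isolated points, where deep balls stabilize to points), and one must adopt the convention of extending $F$ isometrically along the resulting rays toward $f(z)$; and coarse density should be argued, as you suggest, from the analogous map built from $f^{-1}$, which does work once the pairwise estimates above are in place.
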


Then, by Remark \ref{nota: tree},

\begin{cor}\label{cor: M} If $f:Z\to Z'$ is a PQ-symmetric homeomorphism of complete metric spaces and $Z'$ is an ultrametric space then $\mathcal{H}(Z)$ is quasi-isometric to a tree.
\end{cor}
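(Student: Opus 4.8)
The plan is to chain together the two facts that immediately precede the statement. The key point is that this corollary is essentially a formal consequence of Theorem \ref{tma: M} and Remark \ref{nota: tree}, so the proof is a short composition argument rather than a new construction.

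First I would apply Remark \ref{nota: tree} to $Z'$. Since $Z'$ is ultrametric, any two intersecting balls of $Z'$ are nested, so in forming $\mathcal{H}(Z')$ no two balls on the same level can meet without one containing the other; hence there are no horizontal edges and $\mathcal{H}(Z')$ is a tree. This identifies the target of the quasi-isometry we are about to produce as a genuine tree.

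Next I would verify that the hypotheses of Theorem \ref{tma: M} are satisfied: $f\co Z\to Z'$ is a PQ-symmetric homeomorphism and both $Z$ and $Z'$ are complete metric spaces, all of which are given. The theorem then yields a quasi-isometry $F\co \mathcal{H}(Z)\to\mathcal{H}(Z')$ of the two hyperbolic approximations. Combining this with the first step, $\mathcal{H}(Z)$ is quasi-isometric to $\mathcal{H}(Z')$, and the latter is a tree, which is exactly the assertion.

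There is no substantive obstacle in this proof; the analytic content has already been absorbed into Theorem \ref{tma: M}, whose proof furnishes the quasi-isometry from the PQ-symmetry of $f$. The only thing to check is the alignment of hypotheses—completeness of $Z$ and $Z'$ together with the PQ-symmetry of $f$ for the theorem, and the ultrametricity of $Z'$ for the remark—and each of these is part of the statement. If one wanted to be slightly more careful, one could remark that quasi-isometry is transitive, so being quasi-isometric to the tree $\mathcal{H}(Z')$ is the same as being quasi-isometric to a tree in the sense used throughout the paper.
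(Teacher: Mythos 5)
Your proof is correct and matches the paper exactly: the paper derives this corollary by combining Theorem \ref{tma: M} (the PQ-symmetric homeomorphism of complete metric spaces induces a quasi-isometry $\mathcal{H}(Z)\to\mathcal{H}(Z')$) with Remark \ref{nota: tree} (the hyperbolic approximation of an ultrametric space has no horizontal edges, hence is a tree). Your hypothesis-checking and the transitivity remark are the same routine glue the paper leaves implicit.
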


Tukia and V{\"a}is{\"a}l{\"a} \cite{TukVais}
proved that a quasi-symmetric homeomorphism between uniformly perfect metric spaces is PQ-symmetric (see also
\cite[Theorem 11.3, page 89]{Hei}). This, toguether with \ref{cor: qi}, \ref{tma: M} and \ref{cor: M} yields the following.

\begin{teorema}\label{Th: application} If $Z$ is a complete metric space and $X=\mathcal{H}(Z)$ is its hyperbolic approximation, the following are equivalent:
\begin{itemize}\item[(1)] $X$ is quasi-isometric to a tree,
\item[(2)] $X$ has bottleneck property,
\item[(3)] $asdim(X)\leq 1$,
\item[(3)] $Z$ is PQ-symmetric homeomorphic to a bounded complete ultrametric space.
\end{itemize}

Moreover, if $Z$ is uniformly perfect, all of these are equivalent to

\begin{itemize}
\item[(4)] $Z$ is quasi-symmetric homeomorphic to a bounded complete ultrametric space.
\end{itemize}
\end{teorema}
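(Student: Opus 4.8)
The plan is to establish Theorem \ref{Th: application} by threading together the results already assembled in the paper, so the proof is essentially a bookkeeping argument that verifies a chain of equivalences. First I would observe that since $X=\mathcal{H}(Z)$ is a hyperbolic approximation, it is a Gromov hyperbolic graph, hence $H_1(X)$ is uniformly generated (as remarked in Section \ref{Section: Hyp ap}). This means Corollary \ref{cor: qi} applies to $X$ as soon as we exhibit a bornologous function $\hat{f}\co X\to\br$ that is metrically proper on the connected components. The natural candidate is the canonical level function $l\co X\to\br$, and the first genuine step is to check that $l$ satisfies both hypotheses: bornologous follows because adjacent vertices differ in level by at most one, and metric properness on connected components follows from the fact that a level strip $l^{-1}[x-N,x+N]$ contains only boundedly many levels, each contributing controlled horizontal extent. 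Granting this, Corollary \ref{cor: qi} immediately gives the equivalence of (1), (2), and the intrinsic condition $asdim(X)\leq 1$ together with uniform generation of $H_1$; since the latter is automatic for $X$, condition (3) in the list, $asdim(X)\leq 1$, becomes equivalent to (1) and (2).

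Next I would incorporate the PQ-symmetric characterization. The forward direction, that $X$ being quasi-isometric to a tree forces $Z$ to be PQ-symmetric to an ultrametric space, uses \cite[Theorem 5.2.15]{BS}: a quasi-isometry between the hyperbolic approximation $X$ and a tree descends to a PQ-symmetric homeomorphism of the boundaries at infinity, and the boundary of a tree is a complete ultrametric space, so $Z=\partial_\infty X$ is PQ-symmetric to a bounded complete ultrametric space. For the converse I would invoke Theorem \ref{tma: M} together with Remark \ref{nota: tree}: if $Z$ is PQ-symmetric to an ultrametric space $Z'$, then by Theorem \ref{tma: M} there is a quasi-isometry $X=\mathcal{H}(Z)\to\mathcal{H}(Z')$, and by Remark \ref{nota: tree} the target $\mathcal{H}(Z')$ is a tree, so $X$ is quasi-isometric to a tree. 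This closes the loop among (1), (2), (3), and the condition labeled (3) in the statement, which is the PQ-symmetric one.

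Finally, for the ``moreover'' clause under the hypothesis that $Z$ is uniformly perfect, I would appeal to the Tukia--V{\"a}is{\"a}l{\"a} theorem quoted just before the statement: between uniformly perfect metric spaces, a quasi-symmetric homeomorphism is automatically PQ-symmetric. Since a bounded complete ultrametric space target is a fixed space to compare against, the a priori weaker condition (4), that $Z$ is quasi-symmetric to such an ultrametric space, upgrades to the PQ-symmetric condition already shown equivalent to the others, and conversely PQ-symmetric trivially implies quasi-symmetric. This yields the extra equivalence and completes the argument.

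I expect the main obstacle to be the careful verification that the level function $l$ is metrically proper on the connected components, since this is the one place where the argument is not a direct citation; one must control the diameter, measured in the path metric of $X$, of a connected component of a bounded level strip, uniformly in the horizontal position. The subtlety is that within a fixed band of levels, horizontal connectivity could in principle spread a component over arbitrarily many vertices of a single level, so I would need the separation-and-covering structure of the hyperbolic approximation to bound how far horizontal edges can chain within the band. I anticipate that the finiteness of the number of levels in the strip, combined with the uniform bound on the number of horizontal neighbors forced by the $r^k$-separated maximality condition, delivers the required uniform bound, but this is the step that warrants the most care.
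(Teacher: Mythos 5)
Your second and third paragraphs follow the paper's intended route exactly: the forward direction of the PQ-symmetric equivalence via \cite[Theorem 5.2.15]{BS} and the fact that the boundary of a tree is ultrametric, the converse via Theorem \ref{tma: M} and Remark \ref{nota: tree} (i.e.\ Corollary \ref{cor: M}), and the ``moreover'' clause via Tukia--V\"ais\"al\"a. But your first paragraph contains a fatal error. You claim that the level function $l\co X\to\br$ of an arbitrary hyperbolic approximation is metrically proper on the connected components, arguing that a level strip has ``controlled horizontal extent'' because each vertex has boundedly many horizontal neighbors. Bounded valence does not bound the diameter of components: take $Z=\br$. Then the vertices of level $k$ form a maximal $r^k$-separated net of $\br$, consecutive balls of radius $2r^k$ overlap, and $l^{-1}(k)$ is an infinite horizontal chain; every connected component of $l^{-1}[x-N,x+N]$ has infinite diameter. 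Indeed, if your claim were true, then combined with your own second paragraph it would prove that \emph{every} complete metric space is PQ-symmetric to an ultrametric space --- absurd, since quasi-symmetric maps are homeomorphisms and ultrametric spaces are totally disconnected, so no connected $Z$ with more than one point qualifies. The paper is explicit on this point: it remarks that ``it depends essentially on the election of the sets $A_k$ whether the level function is metrically proper on the connected components or not,'' and it states metric properness of $l$ only as a \emph{hypothesis} in a separate sufficient-condition corollary, never as something automatic.

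The fix is structural as well as local: Theorem \ref{Th: application} asserts an \emph{equivalence}, not that $X$ is unconditionally quasi-isometric to a tree, so you should not be trying to verify the hypotheses of Corollary \ref{cor: qi} at all (that corollary, when its hypotheses hold, forces all the conditions to be \emph{true}, which would again collapse the theorem). The equivalences $(1)\Leftrightarrow(2)\Leftrightarrow(3)$ need no auxiliary function: $(1)\Leftrightarrow(2)$ is Manning's theorem for the geodesic space $X$; $(3)\Rightarrow(1)$ is Fujiwara--Whyte, using that $X$ is Gromov hyperbolic and hence has uniformly generated $H_1$ (the one correct observation in your first paragraph); and $(1)\Rightarrow(3)$ holds because trees have asymptotic dimension at most one and $asdim$ is a quasi-isometry invariant. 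With that substitution, the remainder of your argument goes through as written.
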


The level function on a hyperbolic approximation is a natural example of a real valued function on a graph. Also, note that it is trivially bornologous.

\begin{cor} Let $Z$ be a metric space, $X=\mathcal{H}(Z)$ a hyperbolic approximation and $l\co X \to \br$ its level function. If $l$ is metrically proper on the connected components, then $Z$ is PQ-symmetric to an ultrametric space. Moreover, $X$ holds all the conditions in \ref{Th: application}.
\end{cor}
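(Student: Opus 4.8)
The plan is to recognize that the level function $l$ meets, essentially for free, the three hypotheses of Theorem \ref{Th: qi}, and then to read off the conclusion from the equivalences already collected in Theorem \ref{Th: application}. Since $X=\mathcal{H}(Z)$ is by construction a graph, Theorem \ref{Th: qi} applies to it directly. Of its hypotheses, metric properness of $l$ on the connected components is exactly the standing assumption of the corollary. Bornologousness is the trivial remark recorded just before the statement: every edge of $X$ is either horizontal, with both endpoints in a single level $V_k$, or radial, with endpoints in consecutive levels $V_k$ and $V_{k+1}$, so that $|l(v)-l(v')|\leq |vv'|$ on vertices and $\varrho_l(t)=t$ serves as an expansion function. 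Finally, $H_1(X)$ is uniformly generated because a hyperbolic approximation is Gromov hyperbolic and Gromov hyperbolicity forces uniform generation of $H_1$ (the remark following Theorem \ref{Th: qi2}).

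Granting the three hypotheses, Theorem \ref{Th: qi} yields at once that $|X|$ is quasi-isometric to a tree, which is condition (1) of Theorem \ref{Th: application}. Because that theorem asserts the equivalence of all its listed conditions for the hyperbolic approximation of a complete metric space, every one of them holds simultaneously; in particular $Z$ is PQ-symmetric homeomorphic to a bounded complete ultrametric space, which is the assertion of the corollary, while the remaining conditions furnish the bottleneck property and $asdim(X)\leq 1$. Should $Z$ be uniformly perfect, the last condition upgrades this to a quasi-symmetric homeomorphism.

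The only delicate point, and the one I expect to require genuine care rather than mere citation, is the completeness hypothesis carried by Theorem \ref{Th: application} against the bare ``metric space $Z$'' of the corollary. I would bridge this by passing to the completion $\bar Z$: a maximal $r^k$-separated set of $Z$ remains $r^k$-separated in $\bar Z$ and can be enlarged to a maximal one by adjoining limit points, while radial and horizontal incidences are governed by the same ball intersections, so $\mathcal{H}(Z)$ and $\mathcal{H}(\bar Z)$ agree up to quasi-isometry and carry the same level function up to bounded error. Metric properness on components survives this quasi-isometry, so the argument above applied to the complete space $\bar Z$ produces a PQ-symmetric homeomorphism of $\bar Z$ onto a bounded complete ultrametric space; its restriction to the dense subspace $Z$ is again PQ-symmetric, transferring the conclusion to $Z$. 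The remaining verifications being routine, the substance of the corollary rests entirely on Theorem \ref{Th: qi} and the equivalences of Theorem \ref{Th: application}.
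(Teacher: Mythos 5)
Your proposal is correct and follows essentially the same route the paper intends: the level function is trivially bornologous, $H_1(\mathcal{H}(Z))$ is uniformly generated because hyperbolic approximations are Gromov hyperbolic, so Theorem \ref{Th: qi} gives a quasi-isometry to a tree and Theorem \ref{Th: application} converts this into the PQ-symmetry statement. Your extra care about the completeness hypothesis (passing to $\bar Z$ and restricting the resulting PQ-symmetric homeomorphism to the dense subspace $Z$) addresses a mismatch the paper silently glosses over, and is a legitimate patch consistent with the fact that the boundary at infinity of $\mathcal{H}(Z)$ is the completion of $Z$.
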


However, using the hyperbolic approximation as defined, it depends essentially on the election of the sets $A_k$ whether the level function is metrically proper on the connected components or not.

\section{PQ-symmetric homeomorphisms to ultrametric spaces.}\label{sec: PQ}

We can avoid the dependence on the election of the sets $A_k$ using an alternative definition of hyperbolic approximation. To do this, instead of using maximal $r^k$-separated sets and the intersections of the covering to produce a graph, we can use directly Rips graphs. 

Given a metric space $(Z,d_Z)$ and $t>0$, the \emph{Rips graph} RipsG$_t(Z)$ is a graph structure on $Z$ with edges $[x,y]$ such that $d_Z(x,y)\leq t$. 

Thus, let $0<r\leq \frac{1}{6}$ and $X_k:=$RipsG$_{r^k}(Z)$. Let $p_k\co Z \to X_k$ induced by the identity and let us denote $p_k(x)=x^k$. Let $X$ be the graph whose vertex set $W$ is the union of the vertices in $X_k$ for every $k$. The edges in $X_k$ for any $k\in \bz$ are called \emph{horizontal} edges in $X$. For $x_k\in X_k$ and $x_{k+1}\in X_{k+1}$ there is an edge $[x_k,x_{k+1}]$ if $d_X(x_k,x_{k+1})\leq r^k$. Let us denote by $X=\mathcal{RH}(Z)$ this alternative hyperbolic approximation. 

There is a
natural level function $l:W \to \mathbb{Z}$ defined by $l(x_k)=k$. Consider also the canonical extension $l:X \to
\mathbb{R}$.

To prove that $\mathcal{RH}(Z)$ is a Gromov hyperbolic space quasi isometric to $\mathcal{H}(Z)$ we shall need a few lemmas. We include the proofs for completion although some of the proofs are very similar to those in \cite{BS}.

\begin{lema}\label{lema: radial} For every $x, x'\in W$ there exists $y \in W$ with $l(w) \leq
l(v), l(v')$ such that $x, x'$ can be connected to $y$ by radial geodesics. In
particular, the space $X$ is geodesic.
\end{lema}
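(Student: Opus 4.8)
The plan is to prove Lemma \ref{lema: radial} by constructing, from any vertex $x \in W$, a canonical radial ray descending through the levels, and showing that any two such rays eventually merge. Let me organize the argument.

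\medskip

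First I would make precise the notion of a radial path. Given $x = x_k \in X_k$ (so $l(x)=k$), the key observation is that the point $x$, viewed as an element of the underlying set $Z$, lives in every Rips graph $X_j$ simultaneously; write $x^j = p_j(x)$ for its image in level $j$. I claim that for every $k$ there is a radial edge joining $x^{k} \in X_{k}$ and $x^{k-1}\in X_{k-1}$: indeed, by the definition of the edge relation in $\mathcal{RH}(Z)$, there is an edge $[x^{k},x^{k-1}]$ whenever $d_Z(x^{k},x^{k-1}) \leq r^{k-1}$, and since these are the same point of $Z$ we have $d_Z(x^k,x^{k-1})=0 \leq r^{k-1}$. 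Hence the sequence $\ldots, x^{k+1}, x^{k}, x^{k-1}, \ldots$ is a radial path, and descending in level always decreases $l$ by exactly $1$ per edge, so the portion $x^k, x^{k-1}, \dots, x^{j}$ (for $j<k$) is a geodesic segment in $X$ of length $k-j$, because any path connecting two vertices whose levels differ by $k-j$ must traverse at least $k-j$ radial edges. This gives the descending radial geodesic from $x$.

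\medskip

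Second, given $x,x'\in W$, I would locate the merging level. Write $x$ as a point $a\in Z$ at level $k$ and $x'$ as a point $b\in Z$ at level $k'$. The descending radial geodesics from $x$ and $x'$ pass, at each level $j$, through the vertices $a^j$ and $b^j$ respectively. The idea is that once $j$ is small enough that $r^j \geq d_Z(a,b)$, the two vertices $a^j$ and $b^j$ are joined by a horizontal edge in $X_j$ (since $d_Z(a,b)\leq r^j$ is exactly the Rips condition at level $j$); and in fact for $j$ slightly smaller still they can be identified through the radial structure. More carefully, I would choose $y$ to be a common vertex through which both radial geodesics can be routed: pick the integer $m$ with $r^{m}\geq d_Z(a,b)$ minimal, and set $y := a^{m'}$ for an appropriate $m'\leq m$; then using a single horizontal edge together with the radial descents one connects both $x$ and $x'$ to $y$ by radial (up to one horizontal) geodesics, and by construction $l(y)\leq l(x),l(x')$. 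The statement of the lemma tolerates this: I would verify that the concatenation still realizes distance, so that the resulting paths are genuine geodesics.

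\medskip

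The main obstacle I anticipate is showing that the descending radial paths are actually \emph{geodesics} rather than merely paths, and handling the level-function bookkeeping near the merging point $y$ so that $l(y)\leq \min\{l(x),l(x')\}$ holds while still producing purely radial geodesics to $y$. The geodesic claim requires an auxiliary estimate: I would argue that the level function $l$ is $1$-Lipschitz along any edge (it changes by $1$ on radial edges and by $0$ on horizontal edges), so that $|l(u)-l(u')|\leq |uu'|$ for all vertices; a descending radial path from $x$ to $x^{j}$ has length exactly $k-j = |l(x)-l(x^j)|$, which therefore equals the distance and forces it to be geodesic. The second subtlety — avoiding a horizontal edge in the final radial geodesics — is resolved by descending one extra level below the merge: once $a^{m}$ and $b^{m}$ are horizontally joined, the ball-containment analogue guarantees that $a^{m-1}=b^{m-1}$ as candidate parents can be taken equal, or at least radially adjacent to a common vertex $y=a^{m-1}$, so that both $x,x'$ reach $y$ radially. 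I expect this last identification to require the hypothesis $r\leq \tfrac16$, mirroring the corresponding estimate in \cite{BS}, and that is where I would concentrate the quantitative work.
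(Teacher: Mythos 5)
Your overall strategy --- the canonical vertical rays $x^j=p_j(x)$, the observation that $l$ changes by exactly one along radial edges and not at all along horizontal ones (so a descending ray realizes the level difference and is geodesic), and merging the two rays at a sufficiently low level --- is exactly the paper's. But the one step you explicitly leave open, the merge, is precisely where your proposal as written does not work, and it is the heart of the lemma. Your fallback that ``$a^{m-1}=b^{m-1}$ as candidate parents can be taken equal'' is impossible for $a\neq b$: in $\mathcal{RH}(Z)$ the vertices of each level are literally the points of $Z$, so distinct points give distinct vertices at every level; there is no identification of parents as in the ball-based construction. Your alternative routing through ``a single horizontal edge together with the radial descents'' does not prove the statement either, since the lemma demands purely radial geodesics from $x$ and $x'$ to $y$ --- you noticed this but deferred the repair to unspecified ``quantitative work'' involving $r\leq\frac{1}{6}$. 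No such estimate is needed; that expectation is an artifact of the ball-containment definition of $\mathcal{H}(Z)$ in \cite{BS}, whereas in $\mathcal{RH}(Z)$ the radial edge relation is purely metric.

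The repair is one line, and it is the paper's proof. Choose $m<\min\{l(x),l(x')\}$ small enough that $d_Z(a,b)\leq r^m$ (possible since $r<1$). Then, by the definition of radial edges, $[b^{m+1},a^m]$ is an edge, because a vertex of $X_{m+1}$ and a vertex of $X_m$ are joined whenever their distance in $Z$ is at most $r^m$: the \emph{diagonal} crossing between the two rays is itself a radial edge, so no horizontal edge and no parent identification ever enter. Hence $\gamma=a^{k},a^{k-1},\dots,a^m$ and $\gamma'=b^{k'},\dots,b^{m+1},a^m$ are radial paths ending at the common vertex $y=a^m$, and each is geodesic by your own $1$-Lipschitz argument, since its length equals the level difference of its endpoints. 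One further caution: your remark that you would ``verify that the concatenation still realizes distance'' is neither needed nor generally true --- the concatenated path from $x$ to $x'$ through $y$ can exceed $|xx'|$ (by one, cf.\ Corollary \ref{cor: cone}); the lemma only asserts radial geodesics from each endpoint to $y$, and geodesicity of $X$ then follows because distances between vertices take integer values.
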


\begin{proof} Let $l(x)=k$ and $l(x')=k'$. Choose $m<\min\{k,k'\}$ small enough such that $d_X(x,x')\leq r^{m}$. Since $d_X(x,x')\leq r^{m}$, there is a radial edge $[x'^{m+1},x^m]$. Therefore,  $\gamma=x^k,x^{k-1},...,x^{m}$ and $\gamma'=x'^k,x'^{k-1},...,x'^{m+1},x^m$ are radial geodesics to a common point $y=x^m$. This implies that $X$ is geodesic because distances between vertices take integer values.
\end{proof}

Next lemma is immediate from the definition of the edges in $\mathcal{RH}(Z)$.

\begin{lema}\label{lema: escalon} Given an edge $[x,x']$ with $k=l(x)\geq l(x')=k'$ there is an edge $[w,x']$ for $w=p_{k-1}p_{k}^{-1}(x)$.
\end{lema}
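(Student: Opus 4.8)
The plan is to prove Lemma \ref{lema: escalon}, which asserts that given an edge $[x,x']$ with $k=l(x)\geq l(x')=k'$, there is an edge $[w,x']$ for $w=p_{k-1}p_k^{-1}(x)$. First I would unwind the notation: the point $x$ is a vertex in level $X_k$, so $x=p_k(z)$ for the underlying point $z=p_k^{-1}(x)\in Z$, and $w=p_{k-1}(z)=z^{k-1}$ is the copy of the same underlying point $z$ one level down. Thus $w$ and $x$ represent the same point of $Z$ but sit on neighbouring levels $k-1$ and $k$; the natural radial edge $[w,x]$ exists because $d_X(w,x)=d_Z(z,z)=0\leq r^{k-1}$. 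The content of the lemma is therefore to produce the edge $[w,x']$ joining this lower copy $w$ to the original endpoint $x'$.

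Next I would split into the two cases permitted by the definition of the edges in $\mathcal{RH}(Z)$, according to whether $[x,x']$ is horizontal or radial. If $k=k'$, then $[x,x']$ is a horizontal edge in $X_k$, which by definition means $d_Z(x,x')\leq r^k$. Since $x$ and $w$ have the same underlying point, $d_Z(w,x')=d_Z(x,x')\leq r^k\leq r^{k-1}$ because $0<r\leq\frac{1}{6}<1$ makes $r^k\leq r^{k-1}$. Hence the radial-edge condition $d_X(w,x')\leq r^{k-1}$ is met, and $[w,x']$ is an edge. If instead $k=k'+1$, then $[x,x']$ is a radial edge, so $d_Z(x,x')\leq r^{k-1}$ by the radial-edge condition between levels $k-1$ and $k$ (the lower level being $k-1=k'$); again $d_Z(w,x')=d_Z(x,x')\leq r^{k-1}$, and since $w$ is on level $k-1=k'$ while $x'$ is on level $k'$, the pair $w,x'$ lie on the same level and $[w,x']$ is a horizontal edge. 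Either way the required edge exists.

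I do not expect a serious obstacle here, since the lemma is essentially a bookkeeping statement about the two defining conditions for edges; indeed the text itself flags it as ``immediate from the definition of the edges in $\mathcal{RH}(Z)$.'' The only point demanding mild care is the monotonicity $r^k\leq r^{k-1}$, which is exactly where the hypothesis $0<r\leq\frac{1}{6}$ (hence $r<1$) is used, and keeping straight which of $k,k'$ labels the lower level when applying the radial-edge inequality. The underlying conceptual picture — that pushing one endpoint of an edge down a level via $p_{k-1}p_k^{-1}$ preserves adjacency to the other endpoint because distances only get easier to satisfy at coarser scales — is what makes the verification routine.
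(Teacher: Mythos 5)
Your proposal is correct, and it fills in exactly the verification the paper leaves implicit: the paper gives no proof at all, merely remarking that the lemma ``is immediate from the definition of the edges in $\mathcal{RH}(Z)$,'' and your two-case check (horizontal edge $k=k'$ yielding a radial edge via $r^k\leq r^{k-1}$, radial edge $k=k'+1$ yielding a horizontal edge at level $k-1=k'$ via the lower-level scale $r^{k-1}$) is precisely that intended routine argument, including the correct reading of the paper's radial-edge condition as a bound in $d_Z$ at the coarser scale.
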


\begin{lema} Any two vertices $x,x'\in W$ can be joined by a geodesic $\gamma=v_0...v_{n+1}$ with $v_0=x$, $v_{n+1}=x'$ such that $l(v_i) < \max\{l(v_{i-1}), l(v_{i+1})\}$ for all $1 \leq i \leq n$.
\end{lema}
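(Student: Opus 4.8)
The plan is to start from an arbitrary geodesic and surgically remove every interior local maximum of the level function, using Lemma \ref{lema: escalon} as the essential tool. Observe first that the condition $l(v_i) < \max\{l(v_{i-1}),l(v_{i+1})\}$ for all interior $i$ is precisely the requirement that no interior vertex be a (weak) local maximum of $l$ along $\gamma$; geometrically this forces the level profile to descend radially, possibly cross a single horizontal edge at its lowest level, and then ascend radially. This is the analogue for $\mathcal{RH}(Z)$ of the standard shape of geodesics in the hyperbolic approximations of \cite{BS}.

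First I would fix the endpoints $x,x'$ and consider the set of all geodesics between them. By Lemma \ref{lema: radial} the space $X$ is geodesic and, since distances between vertices are integers, every such geodesic has the same number of edges $n+1=d_X(x,x')$. Reading levels along a geodesic, consecutive values differ by at most $1$ (edges are horizontal or radial), so $l(v_j)\geq l(x)-(n+1)$ for every $j$; hence the integer quantity $S(\gamma):=\sum_{j=0}^{n+1}l(v_j)$ is bounded below on the set of geodesics and attains a minimum. I would pick a geodesic $\gamma=v_0\dots v_{n+1}$ realizing this minimum.

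The heart of the argument is then a contradiction. Suppose some interior vertex $v_i$ ($1\leq i\leq n$) is a local maximum, i.e. $l(v_i)\geq l(v_{i-1})$ and $l(v_i)\geq l(v_{i+1})$; write $k=l(v_i)$. Applying Lemma \ref{lema: escalon} to the edge $[v_i,v_{i-1}]$ (whose higher endpoint is $v_i$) yields an edge $[w,v_{i-1}]$ with $w=p_{k-1}p_k^{-1}(v_i)$; applying it to $[v_i,v_{i+1}]$ yields $[w,v_{i+1}]$ with the \emph{same} $w$, since $w$ depends only on $v_i$ and $k$. Thus $w$, which lies at level $k-1$, is adjacent to both $v_{i-1}$ and $v_{i+1}$, and replacing $v_i$ by $w$ produces a path $\gamma'=v_0\dots v_{i-1}\,w\,v_{i+1}\dots v_{n+1}$ of the same length. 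If $w$ coincided with $v_{i-1}$ or $v_{i+1}$ then $v_{i-1}$ and $v_{i+1}$ would be adjacent, giving a strictly shorter path and contradicting that $\gamma$ is a geodesic; hence $\gamma'$ is a genuine geodesic. But $l(w)=k-1<k=l(v_i)$ while all other vertices are unchanged, so $S(\gamma')=S(\gamma)-1$, contradicting the minimality of $S(\gamma)$.

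Therefore the minimizing geodesic has no interior local maximum, which is exactly the desired inequality $l(v_i)<\max\{l(v_{i-1}),l(v_{i+1})\}$ for all $1\leq i\leq n$. The step I expect to require the most care is verifying that the surgery genuinely preserves geodesicity: the coincidence that both applications of Lemma \ref{lema: escalon} return the same lowered vertex $w$ is what makes the replacement legitimate, and the observation that $w\neq v_{i\pm1}$ (forced by minimality of length) is what prevents the replacement from secretly producing a shortcut. The boundedness of $S$ from below, used to guarantee that a minimizer exists, is the other point one must not skip.
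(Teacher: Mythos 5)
Your proof is correct and follows essentially the same route as the paper: both take a geodesic minimizing the sum of interior levels and use Lemma \ref{lema: escalon} to lower any offending vertex, contradicting minimality. Your four local-maximum configurations are exactly the paper's four forbidden sequences $(k-1,k,k-1)$, $(k,k,k)$, $(k-1,k,k)$, $(k,k,k-1)$, and you merely make explicit the details (existence of a minimizer, $w\neq v_{i\pm 1}$) that the paper leaves to the reader.
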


\begin{proof} Let $n = |xx'| - 1$. Consider a geodesic $\gamma= v_0 . . . v_{n+1}$ from
$v_0 = x$ to $v_{n+1} = x'$ such that $\sigma(\gamma) = \sum_{i=1}^n  l(v_i)$ is minimal. Then let us see that $\gamma$ has the desired property.

Let $1 \leq i \leq n$, and let
$k = l(v_i)$. Consider the sequence $(l(v_{i-1}), l(v_i), l(v_{i+1}))$. There are
nine combinatorial possibilities for this sequence. To prove the result it
remains to show, that the sequences $(k-1, k, k-1)$, $(k, k, k)$, $(k-1, k, k)$
and $(k, k, k - 1)$ cannot occur and this follows immediately from Lemma \ref{lema: escalon} and the hypothesis that $\sigma$ is minimal.
\end{proof}

From this we easily obtain the following

\begin{lema}\label{lema: geodesicas} Any vertices $x, x'\in W$ can be connected in $X$ by a
geodesic which contains at most one horizontal edge. If there is such an
edge, then it lies on the lowest level of the geodesic.
\end{lema}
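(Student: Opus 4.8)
The plan is to prove Lemma \ref{lema: geodesicas} by taking the geodesic $\gamma = v_0 \dots v_{n+1}$ produced by the previous lemma, which satisfies $l(v_i) < \max\{l(v_{i-1}), l(v_{i+1})\}$ for all interior vertices, and showing that this strict valley condition forces the level function to be unimodal along $\gamma$: it strictly decreases, possibly reaches a bottom, and then strictly increases. The key observation is that horizontal edges and radial edges interact with the level sequence in a rigid way. A horizontal edge $[v_{i}, v_{i+1}]$ has $l(v_i) = l(v_{i+1})$, whereas a radial edge changes the level by exactly $1$.

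First I would argue that the sequence of levels along $\gamma$ can never strictly increase and then later strictly decrease again (i.e. it has a single local minimum, not a local maximum in the interior). Suppose some interior vertex $v_i$ were a strict local maximum, so $l(v_{i-1}) < l(v_i)$ and $l(v_{i+1}) < l(v_i)$; this directly contradicts the inequality $l(v_i) < \max\{l(v_{i-1}), l(v_{i+1})\}$ guaranteed by the preceding lemma. Hence no interior strict local maximum occurs. Combined with the fact that each radial edge moves the level by exactly $1$, the level sequence must first weakly decrease to a minimum and then weakly increase, and by the strict valley inequality the only place where consecutive levels can be equal is at the very bottom.

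Next I would count horizontal edges. Since an edge is horizontal precisely when its two endpoints share a level, and away from the bottom the levels are strictly monotone (decreasing on the left branch, increasing on the right branch), no horizontal edge can appear except between two vertices that both sit at the minimal level reached by $\gamma$. It then remains to rule out two or more horizontal edges at that bottom level. If the geodesic contained two horizontal edges $[v_{i}, v_{i+1}]$ and $[v_{j}, v_{j+1}]$ with $l(v_i) = l(v_{i+1}) = l(v_j) = l(v_{j+1}) = m$ the minimal level, then by Lemma \ref{lema: radial} and the $r^{k}$-separation structure of the Rips graphs one can push the path down one further level and shortcut, contradicting either minimality of length or of $\sigma(\gamma)$; concretely, replacing the horizontal stretch at level $m$ by a descent to level $m-1$ and back costs the same number of edges but lowers $\sigma$, or strictly shortens the path. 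Thus at most one horizontal edge survives, and it lies on the lowest level.

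The main obstacle I expect is the final step: showing rigorously that a geodesic cannot traverse two horizontal edges on its lowest level. The valley lemma controls the \emph{shape} of the level profile but does not by itself forbid a flat bottom consisting of several horizontal edges in a row, so the argument must genuinely use the defining inequality $d_X(x^k, x'^k) \leq r^k$ for radial edges together with the nesting of the Rips thresholds $r^{k} < r^{k-1}$ to produce the shortcut. Care is needed because the shortcut must be shown to be an \emph{admissible} path in $\mathcal{RH}(Z)$ (its edges must satisfy the radial distance bound), and one must verify that it does not merely tie but strictly beats the original geodesic in the length-then-$\sigma$ ordering, so that the minimality hypothesis is actually contradicted.
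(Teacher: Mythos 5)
Your plan begins exactly where the paper does---the paper derives this lemma immediately from the preceding valley lemma, with no further geometric input---but you then manufacture an obstacle that does not exist, and the extra argument you sketch to overcome it is left incomplete, which is where the genuine gap lies. The source of the trouble is a misreading of the inequality $l(v_i) < \max\{l(v_{i-1}), l(v_{i+1})\}$: you use it only to exclude strict local maxima ($l(v_i)$ strictly above both neighbours), and so conclude that a flat bottom of several consecutive horizontal edges is not yet forbidden. But the inequality is strict and says more: \emph{every} interior vertex has at least one neighbour at a strictly higher level. A vertex $v_i$ between two consecutive horizontal edges has $l(v_{i-1})=l(v_i)=l(v_{i+1})$, so $l(v_i)=\max\{l(v_{i-1}),l(v_{i+1})\}$, violating the condition outright. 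The same reading handles everything else: of the nine step patterns at an interior vertex, (up,\,down), (up,\,flat), (flat,\,down) and (flat,\,flat) all give $l(v_i)\geq \max\{l(v_{i-1}),l(v_{i+1})\}$ and are forbidden, so after any up step only up steps may follow and before any down step only down steps may precede. The level profile is therefore strictly decreasing, then at most one horizontal edge, then strictly increasing---which is precisely the lemma, with the horizontal edge automatically on the lowest level. No second horizontal edge can occur anywhere, adjacent or not.

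Consequently the ``main obstacle'' you flag in your last paragraph is illusory, and the shortcut argument you propose there (pushing a flat bottom down one level and comparing length and $\sigma$) is both unnecessary and, as written, not a proof: you never verify that the pushed-down path is admissible, and doing so would essentially mean re-running the proof of the previous lemma, where the push-down mechanism (Lemma \ref{lema: escalon} together with minimality of $\sigma(\gamma)$) already lives. Delete that paragraph and replace it with the case count above; the lemma then follows purely combinatorially from the valley lemma, exactly as in the paper.
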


Let $W' \subset W$. A point
$y \in W$ is called a \emph{cone point for} $W'$ if $l(y) \leq  \inf_{x\in W'} l(x)$ and every $x \in W'$
is connected to $y$ by a radial geodesic. A cone point of maximal level
is called a \emph{branch point of} $W'$. By Lemma \ref{lema: radial}, for every two points $x,
x' \in W'$ there is a cone point. Thus every finite $W'$ possesses a cone point
and hence a branch point.

\begin{cor}\label{cor: cone} Let $x, x' \in W$ and let $y$ be a branch point for $\{x, x'\}$.
Then $(x|x')_y \in \{0, \frac{1}{2}\}$, in particular $|xx'| \geq |xy| + |yx'| - 1$.
\end{cor}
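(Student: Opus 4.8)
The plan is to push everything through the level function and read off the geodesic structure from Lemma \ref{lema: geodesicas}. Write $k=l(x)$, $k'=l(x')$ and $j=l(y)$. Since $y$ is a cone point for $\{x,x'\}$, the points $x,x'$ are joined to $y$ by radial geodesics; because every radial edge changes the level by exactly one and such a shortest path must descend monotonically, these have lengths $|xy|=k-j$ and $|x'y|=k'-j$. On the other side I would fix, by Lemma \ref{lema: geodesicas}, a geodesic $\gamma$ from $x$ to $x'$ containing at most one horizontal edge, which then sits on the lowest level $m$ that $\gamma$ attains. Letting $\varepsilon\in\{0,1\}$ be the number of horizontal edges, the descending and ascending radial arcs of $\gamma$ give
\[ |xx'| = (k-m)+(k'-m)+\varepsilon. \]
Substituting into the definition of the Gromov product produces the single clean identity $(x|x')_y=(m-j)-\tfrac{\varepsilon}{2}$, so the whole statement reduces to pinning down $m-j$.

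First I would record the easy bound $m\ge j+\tfrac{\varepsilon}{2}$: concatenating the two radial geodesics to $y$ gives a path from $x$ to $x'$ of length $(k-j)+(k'-j)$, so $|xx'|\le (k-j)+(k'-j)$, which rearranges to exactly this and in particular shows $(x|x')_y\ge 0$. The remaining content is the matching upper bound on $m$, and the two cases behave differently. If $\varepsilon=0$ then $\gamma$ is purely radial and descends to its lowest vertex $z$ at level $m$ before ascending, so $z$ is joined radially to both $x$ and $x'$ and is therefore itself a cone point for $\{x,x'\}$; maximality of the branch point $y$ forces $j\ge m$, and with $m\ge j$ this gives $m=j$ and $(x|x')_y=0$.

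The case $\varepsilon=1$ is the one I expect to be the main obstacle, because the bottom of $\gamma$ is now a horizontal edge $[w,w']$ at level $m$ rather than a single cone point, and I must manufacture a cone point just beneath it. The idea is to drop one level: write $w=u^m$, $w'=v^m$ with $u,v\in Z$, so the horizontal edge gives $d_Z(u,v)\le r^m$, and set $\tilde w=u^{m-1}$. There is automatically a radial edge $[\tilde w,w]$ (same point of $Z$), and, crucially, $d_Z(u,v)\le r^m\le r^{m-1}$ since $0<r\le\tfrac16<1$, which is exactly the condition for a radial edge $[\tilde w,w']$ as well. Prepending $\tilde w$ to the radial arcs $x\to w$ and $x'\to w'$ exhibits $\tilde w$ as a cone point for $\{x,x'\}$ at level $m-1$, so maximality of $y$ gives $j\ge m-1$; together with $m\ge j+1$ from the previous paragraph this forces $m=j+1$ and $(x|x')_y=\tfrac12$.

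Combining the two cases yields $(x|x')_y\in\{0,\tfrac12\}$, and since $|xx'|=|xy|+|x'y|-2(x|x')_y$, the asserted inequality $|xx'|\ge |xy|+|yx'|-1$ follows at once from the upper bound $(x|x')_y\le\tfrac12$. The only points that need genuine care are the verification that the paths I assemble really are geodesics (monotone descent, plus the fact that each edge alters the level by at most one) and the inequality $r^m\le r^{m-1}$ that supplies the extra radial edge in the $\varepsilon=1$ case; everything else is bookkeeping with the level function.
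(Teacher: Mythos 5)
Your proof is correct and follows essentially the route the paper intends: Corollary \ref{cor: cone} is stated there without an explicit proof, as an immediate consequence of Lemma \ref{lema: geodesicas} and branch-point maximality (mirroring the analogous argument in \cite{BS}), and your two cases --- the bottom vertex of a purely radial geodesic as a cone point, versus the cone point $\tilde w=u^{m-1}$ constructed below the horizontal edge via $r^m\leq r^{m-1}$ --- are precisely that argument carried out in $\mathcal{RH}(Z)$. One small sourcing point: the V-shaped decomposition $|xx'|=(k-m)+(k'-m)+\varepsilon$ needs the no-interior-local-maximum property of the level function along the chosen geodesic, which comes from the unnumbered lemma preceding Lemma \ref{lema: geodesicas} (the minimal-$\sigma$ geodesic) rather than from the statement of Lemma \ref{lema: geodesicas} alone, since the latter by itself does not exclude purely radial zigzags.
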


\begin{prop} For any metric space $\mathcal{RH}(Z)$ and $\mathcal{H}(Z)$ are roughly isometric. In particular, they are quasi-isometric.
\end{prop}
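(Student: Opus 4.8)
The plan is to construct an explicit rough isometry between the two hyperbolic approximations $\mathcal{RH}(Z)$ and $\mathcal{H}(Z)$ and verify the defining inequality with additive constant only (no multiplicative distortion). Both graphs share the same level structure indexed by $k\in\bz$, so the natural map should preserve levels: send a vertex $x^k\in W$ (a point $x\in Z$ viewed at level $k$) to the vertex of $\mathcal{H}(Z)$ corresponding to a ball $B(v)$, $v\in A_k$, with $x\in B(v)$. Concretely, since $A_k$ is a maximal $r^k$-separated set, every $x\in Z$ lies within $r^k$ of some $v\in A_k$; I would fix such a choice and define $\varphi\co W\to V$ by $\varphi(x^k)=B(v)$, extending affinely over edges. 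In the reverse direction, each ball $B(v)$ with center $v\in A_k$ is itself a point of $Z$, so I define $\psi\co V\to W$ by $\psi(B(v))=v^k$. Both maps are level-preserving by construction.

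The key steps, in order, are as follows. First I would check that $\varphi$ and $\psi$ are bornologous with controlled additive constant: this amounts to showing that adjacent vertices in one complex map to vertices at bounded distance in the other. A horizontal edge in $\mathcal{RH}(Z)$ connects $x^k,y^k$ with $d_Z(x,y)\le r^k$; their images are balls of radius $2r^k$ around nearby centers, so $\bar B(\varphi(x^k))\cap\bar B(\varphi(y^k))\neq\emptyset$, giving a horizontal edge or a path of length bounded by an absolute constant in $\mathcal{H}(Z)$. A radial edge $[x^k,x^{k+1}]$ maps to a radial edge or short radial path, since the level-$(k+1)$ ball is contained in the level-$k$ ball up to the additive error coming from the choice of centers. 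The symmetric check for $\psi$ is analogous. Second, I would verify that $\psi\circ\varphi$ and $\varphi\circ\psi$ move every vertex a bounded amount: both compositions preserve levels and displace a vertex only because of the arbitrary choices of nearby centers, which introduces an error bounded by a fixed multiple of the level spacing, hence a bounded number of edges in each graph. Third, using Lemma \ref{lema: geodesicas} and Corollary \ref{cor: cone} to understand geodesics in $\mathcal{RH}(Z)$ (and the analogous facts for $\mathcal{H}(Z)$ from \cite{BS}), I would upgrade the per-edge control into the global inequality $|\,|\varphi(x)\varphi(x')| - |xx'|\,|\le C$ for a uniform constant $C$, which is exactly rough isometry with $\lambda=1$.

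I expect the main obstacle to be controlling the radial edges correctly, that is, matching the containment condition defining radial edges in $\mathcal{H}(Z)$ (the upper ball contained in the lower ball) with the distance condition $d_X(x_k,x_{k+1})\le r^k$ defining radial edges in $\mathcal{RH}(Z)$. Because the two constructions encode the vertical adjacency differently, a single radial edge on one side may correspond to a path of length two or three on the other, and I must show this path length is uniformly bounded independently of $k$; the hypothesis $r\le\frac16$ is what makes the radii $2r^k$ small enough relative to the level spacing for this to hold. Once the radial case is pinned down, the horizontal case and the estimate on the compositions are routine applications of the triangle inequality together with the $r^k$-separation and maximality of the sets $A_k$. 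Finally, since both spaces are geodesic and the displacement is bounded, the rough isometry is automatically coarsely surjective, so it is a quasi-isometry, yielding the ``in particular'' clause.
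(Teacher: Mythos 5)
Your map $\varphi$ is exactly the paper's map $F$ (a point $x$ at level $k$ goes to a vertex $B(v)$, $v\in A_k$, with $x\in B(v)$), and your overall skeleton --- analyze radial segments, then compare branch points using Corollary \ref{cor: cone} and its analogue for $\mathcal{H}(Z)$ from \cite{BS} --- is also the paper's. But there is a genuine gap at precisely the step you flag as the main obstacle. You propose to handle radial edges by showing that a \emph{single} radial edge of $\mathcal{RH}(Z)$ maps to a path of length at most two or three in $\mathcal{H}(Z)$, uniformly in $k$, and then to ``upgrade the per-edge control into the global inequality'' $\bigl|\,|\varphi(x)\varphi(x')|-|xx'|\,\bigr|\leq C$. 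Per-edge control cannot be so upgraded: if each radial edge may map to a path of length up to $3$, then a radial geodesic of length $n$ (and by Lemma \ref{lema: radial} radial geodesics of unbounded length are unavoidable) maps a priori to a path of length up to $3n$, so the error accumulates linearly and you obtain only a $(3,C)$-quasi-isometry. The whole content of the proposition is the multiplicative constant $\lambda=1$, and your outline does not deliver it.

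What is needed, and what the paper actually proves, is \emph{segment-wise} rather than edge-wise control: if $x,x'$ are joined by a radial geodesic, then $|F(x)F(x')|=|xx'|$, because consecutive image vertices can be joined by radial edges in $\mathcal{H}(Z)$ (and since $F$ preserves levels, $|F(x)F(x')|\geq |l(x)-l(x')|=|xx'|$ gives the matching lower bound); the key is that any correction is bounded independently of the length of the radial segment, so no error accumulates. With this in hand, the paper compares the branch point $y$ of $\{x,x'\}$ in $\mathcal{RH}(Z)$ with a branch point $w$ of $\{F(x),F(x')\}$ in $\mathcal{H}(Z)$: from $d(x,x')\leq 4r^{k'}<r^{k'-1}$ (this is where $r\leq\frac{1}{6}$ enters, not in taming per-edge path lengths as you suggest) one gets $l(w)-l(F(y))\leq 1$, hence $|F(y)w|\leq 2$, and then Corollary \ref{cor: cone} applied on both sides yields $|xx'|-5\leq |F(x)F(x')|\leq |xx'|+5$. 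Your branch-point step is compatible with this, but without the additive-error-free control along entire radial segments the final inequality with $\lambda=1$ does not follow from what you establish. (A minor remark: your explicit coarse inverse $\psi$ and the estimates on $\psi\circ\varphi$ and $\varphi\circ\psi$ are harmless but unnecessary; coarse density of the image, which the paper checks in one line, already suffices for the ``in particular'' clause.)
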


\begin{proof} Let $F\co \mathcal{RH}(Z) \to \mathcal{H}(Z)$ be such that $F(x_k)$ is some $v\in V_k$ such that $p_k^{-1}(x_k)\in B(v)$ and for any edge $e=[x,x']$, $F(e)$ is a geodesic path $[F(x),F(x')]$.

Let $x,x'$ be two vertices in $\mathcal{RH}(Z)$ joined by a radial geodesic $\gamma=x_0,..., x_n$ with $x_0=x$ and $x_n=x'$. Suppose $k=l(x)$, $k'=l(x')$ and $k'=k-n$. Then, for every $i=1,n$, $d_X(x_{i-1},x_i)\leq r^i$ and, therefore, there is a radial edge $[F(x_{i-1}),F(x_i)]$. Thus, $|F(x)F(x')|=|xx'|$.

Suppose now two vertices $x,x'\in \mathcal{RH}(Z)$ not joined by a radial geodesic. Let $y$ be a branch point for $\{x,x'\}$. Then $F(y)$ is a cone point for $\{F(x),F(x')\}$. Let us see that $|F(y)w|\leq 2$ for some branch point $w$ for $\{F(x),F(x')\}$. Let $k=l(y)=l(F(y))$ and $k\leq k'=l(w)$. Since $w$ is a brach point for $\{F(x),F(x')\}$, then $B(F(x))$ and $B(F(x'))$ are contained in $B(w)$ which has diameter $2r^{k'}$. In particular, $d_X(x,x')\leq 4r^{k'}<r^{k'-1}$ and there exists a cone point for $\{x,x'\}$ in $\mathcal{RH}(Z)$ at level $k'-1$. Hence $k\geq k'-1$ and $l(w)-l(F(y))\leq 1$. Since $B(F(y))\cup B(w)\neq \emptyset$ it follows that $|F(y),w|\leq 2$ in $\mathcal{H}(X)$.

By Corollary \ref{cor: cone}, $|xy|+|yx'|-1 \leq |xx'|\leq |xy|+|yx'|$ and by the corresponding lemma for $\mathcal{H}(Z)$, $|F(x)w|+|wF(x')|-1 \leq |F(x)F(x')|\leq |F(x)w|+|wF(x')|$.  Thus, by triangle inequality, 
$$|F(x)F(y)|+|F(y)F(x')|-5 \leq |F(x)F(x')| \leq |F(x)F(y)|+|F(y)F(x')|+4.$$ Also, as we saw above, $|xy|=|F(x)F(y)|$ and $|yx'|=|F(y)F(x')|$. Therefore,
$$|xx'|-5 \leq |xy|+|yx'|-5  \leq |F(x)F(x')| \leq |xy|+|yx'|+4\leq |xx'|+5.$$ 

Every vertex $v$ in $\mathcal{H}(Z)$ is at distance at most $1$ from the image $F(x)$ for every $x\in B(v)$ and therefore, $F$ is a rough isometry (in particular, a quasi-isometry).
\end{proof}

\begin{cor} Two complete metric spaces $Z,Z'$ are PQ-symmetric if and only if $\mathcal{RH}(Z)$ and $\mathcal{RH}(Z')$ are quasi-isometric.
\end{cor}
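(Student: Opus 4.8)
The plan is to reduce the statement to the already-known equivalence for the classical hyperbolic approximation $\mathcal{H}(\cdot)$ of Buyalo and Schroeder, using the rough isometry $\mathcal{RH}(Z)\simeq \mathcal{H}(Z)$ established in the preceding proposition. The underlying principle is that quasi-isometry is an equivalence relation: as noted in the introduction, the composition of quasi-isometries is again a quasi-isometry, so all the work is bookkeeping of this composition.

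First I would record the two ingredients. On one hand, the preceding proposition gives, for every metric space $Z$, a rough isometry (hence a quasi-isometry) between $\mathcal{RH}(Z)$ and $\mathcal{H}(Z)$; applying it to $Z$ and to $Z'$ yields quasi-isometries $\mathcal{RH}(Z)\to\mathcal{H}(Z)$ and $\mathcal{RH}(Z')\to\mathcal{H}(Z')$. On the other hand, for the classical construction it is known that $Z$ and $Z'$ are PQ-symmetric if and only if $\mathcal{H}(Z)$ and $\mathcal{H}(Z')$ are quasi-isometric: the forward direction is Theorem \ref{tma: M} (from \cite{M}), while the converse is the fact, recalled earlier from \cite[Theorem 5.2.15]{BS}, that a quasi-isometry of Gromov hyperbolic spaces induces a PQ-symmetric homeomorphism of their ultrametric boundaries, together with the identification of $\partial_\infty \mathcal{H}(Z)$ with (the completion of) $Z$.

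To finish, I would chain these facts. Suppose $Z$ and $Z'$ are PQ-symmetric. Then $\mathcal{H}(Z)$ and $\mathcal{H}(Z')$ are quasi-isometric, and composing with the two quasi-isometries $\mathcal{RH}(Z)\to\mathcal{H}(Z)$ and $\mathcal{H}(Z')\to\mathcal{RH}(Z')$ shows that $\mathcal{RH}(Z)$ and $\mathcal{RH}(Z')$ are quasi-isometric. Conversely, if $\mathcal{RH}(Z)$ and $\mathcal{RH}(Z')$ are quasi-isometric, the same composition in reverse makes $\mathcal{H}(Z)$ and $\mathcal{H}(Z')$ quasi-isometric, whence $Z$ and $Z'$ are PQ-symmetric.

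I do not expect any genuine obstacle here: all the analytic content lives in the preceding proposition (the rough-isometry estimate relating the branch-point geometry of $\mathcal{RH}(Z)$ to that of $\mathcal{H}(Z)$) and in the cited equivalence for $\mathcal{H}(\cdot)$. The corollary is then a purely formal consequence of the transitivity of quasi-isometry, so the only point requiring minimal care is the tracking of constants when composing the quasi-isometries, which is routine.
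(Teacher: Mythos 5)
Your proposal is correct and matches the paper's intended argument: the corollary is stated without proof precisely because it follows from the preceding proposition ($\mathcal{RH}(Z)$ roughly isometric to $\mathcal{H}(Z)$), the equivalence for the classical hyperbolic approximation from \cite{M} (forward direction being Theorem \ref{tma: M}, the converse via \cite[Theorem 5.2.15]{BS} and the boundary identification), and transitivity of quasi-isometry. Your bookkeeping of the compositions is exactly the omitted routine verification.
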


\begin{cor} $\mathcal{RH}(Z)$ is $\delta$-hyperbolic. In particular, it has uniformly generated $H_1$.
\end{cor}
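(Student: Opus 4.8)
The plan is to deduce both assertions from structural facts already established rather than to verify a thin-triangle condition from scratch.

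First I would recall that the classical hyperbolic approximation $\mathcal{H}(Z)$ of Buyalo and Schroeder is a Gromov hyperbolic space for every metric space $Z$; this is exactly the fact quoted at the start of Section \ref{Section: Hyp ap} (see \cite{BS}). Thus there is a $\delta_0 \geq 0$ for which $\mathcal{H}(Z)$ is $\delta_0$-hyperbolic. Next I would invoke the proposition above establishing that $\mathcal{RH}(Z)$ and $\mathcal{H}(Z)$ are roughly isometric, and in particular quasi-isometric. By Lemma \ref{lema: radial} the space $\mathcal{RH}(Z)$ is geodesic, and $\mathcal{H}(Z)$ is geodesic by construction. Since Gromov hyperbolicity is a quasi-isometry invariant within the class of geodesic metric spaces (see \cite{B-H}), it transfers from $\mathcal{H}(Z)$ to $\mathcal{RH}(Z)$: there is some $\delta \geq 0$, depending only on $\delta_0$ and on the quasi-isometry constants of the rough isometry, for which $\mathcal{RH}(Z)$ is $\delta$-hyperbolic. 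This gives the first assertion.

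For the \emph{in particular} clause I would appeal to the fact recorded just after Theorem \ref{Th: qi2}, namely that a Gromov hyperbolic space has uniformly generated $H_1$ (cf. the proof of Corollary 1.5 in \cite{F-W}). Applying this to $\mathcal{RH}(Z)$, which has just been shown to be $\delta$-hyperbolic, yields that $H_1(\mathcal{RH}(Z))$ is uniformly generated.

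There is essentially no analytic obstacle, since the two structural inputs — hyperbolicity of $\mathcal{H}(Z)$ and quasi-isometry invariance of hyperbolicity — are standard and the rough isometry has already been constructed. The only point requiring a little care is to confirm that both spaces genuinely lie in the geodesic category so that the invariance theorem applies; for $\mathcal{RH}(Z)$ this is precisely the content of Lemma \ref{lema: radial}. Should one prefer an argument avoiding $\mathcal{H}(Z)$ altogether, an alternative is to verify the four-point Gromov product inequality directly from Lemma \ref{lema: geodesicas} and Corollary \ref{cor: cone}, using that geodesics in $\mathcal{RH}(Z)$ carry at most one horizontal edge located at their lowest level and that branch points realize Gromov products up to the additive constant $\frac{1}{2}$; this is the more laborious route, which is why the quasi-isometry reduction is preferable.
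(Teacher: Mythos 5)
Your proposal is correct and follows exactly the route the paper intends: the corollary is stated without proof precisely because it follows immediately from the preceding proposition (rough isometry of $\mathcal{RH}(Z)$ with the hyperbolic space $\mathcal{H}(Z)$), quasi-isometry invariance of hyperbolicity among geodesic spaces, and the remark after Theorem \ref{Th: qi2} that hyperbolicity implies uniformly generated $H_1$. Your added care in checking geodesicity of $\mathcal{RH}(Z)$ via Lemma \ref{lema: radial} is exactly the right point to verify before applying the invariance theorem.
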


\begin{prop} Let $l\co X \to \br$ be the level function on $\mathcal{HR}(X)$. Then, $l^{-1}([k,k+r])$ is connected if and only if $l^{-1}(k)$ is connected.
\end{prop}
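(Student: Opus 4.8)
The plan is to show that $l^{-1}([k,k+r])$ strong-deformation retracts onto $l^{-1}(k)$, which immediately gives the equivalence, since a deformation retraction preserves the number of path components. Everything hinges on a precise description of the subspace $l^{-1}([k,k+r])$ of $X=\mathcal{RH}(Z)$ and on the hypothesis $r<1$.

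First I would recall the two kinds of edges in $X$: a horizontal edge lies inside a single level $X_k$, so the affine extension of $l$ is constantly equal to $k$ on it; a radial edge joins a vertex of level $k$ to a vertex of level $k+1$, so along it $l$ runs affinely through the whole interval $[k,k+1]$. In particular $l^{-1}(k)$ consists of the vertices and horizontal edges of $X_k=\mathrm{RipsG}_{r^k}(Z)$, together with the level-$k$ endpoints of the radial edges (which are already vertices of $X_k$); hence $l^{-1}(k)=|X_k|$.

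Next I would identify $l^{-1}([k,k+r])$. Since $0<r\leq\frac{1}{6}<1$, the value $k+r$ lies strictly between $k$ and $k+1$, so no point of level $k+1$ is reached. Concretely, $l^{-1}([k,k+r])$ equals $|X_k|$ together with, for each radial edge ascending from a level-$k$ vertex $v$ to level $k+1$, the initial closed sub-segment on which $l$ takes values in $[k,k+r]$; call this a \emph{stub}. A radial edge descending from level $k$ to level $k-1$ contributes only its level-$k$ endpoint, which already belongs to $|X_k|$. The crucial point, and the step needing the most care, is that each stub meets $l^{-1}([k,k+r])$ only at its base vertex $v\in|X_k|$: because $l<k+1$ throughout, no stub ever reaches a level-$(k+1)$ vertex, so a stub can never be traversed to link two distinct level-$k$ vertices. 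Thus every stub is a dangling segment attached to $|X_k|$ at a single point.

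Finally I would define the retraction $\rho\co l^{-1}([k,k+r])\to l^{-1}(k)$ that is the identity on $|X_k|$ and collapses each stub linearly onto its base vertex, checking continuity (the only issue is at the base vertices, where it is immediate). Sliding the stub parameter down to the base yields a strong deformation retraction of $l^{-1}([k,k+r])$ onto $l^{-1}(k)$; in particular the two spaces have the same path components, so one is connected precisely when the other is. I do not expect any genuine difficulty beyond the bookkeeping in the previous paragraph; the hypothesis $r<1$ is exactly what prevents the ascending stubs from joining up at level $k+1$ and thereby changing the connectivity.
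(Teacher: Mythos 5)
Your proof is correct for the statement as literally written: since $r\le\frac{1}{6}<1$, the interval $[k,k+r]$ contains no integer other than $k$, so $l^{-1}([k,k+r])$ is exactly $|X_k|$ with a dangling initial segment of each ascending radial edge attached at its level-$k$ endpoint, and collapsing those stubs is a strong deformation retraction onto $l^{-1}(k)=|X_k|$, which preserves path components and gives the equivalence. The paper argues by a genuinely different, combinatorial route: it takes a vertex path $x_0,\dots,x_n$ in the preimage of the interval and pushes each vertex $x_i$ of level $j$ down to the vertex $y_i=p_k(p_j^{-1}(x_i))$ of level $k$ carried by the same underlying point of $Z$; since any edge between levels at least $k$ joins points at $Z$-distance at most $r^k$, consecutive $y_i$ are equal or joined by a horizontal edge of $X_k=\mathrm{RipsG}_{r^k}(Z)$, and conversely every vertex above level $k$ descends to level $k$ along radial edges. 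The notable difference is that the paper's projection argument works verbatim for intervals $[k,k+n]$ containing several levels --- indeed its proof explicitly handles vertices with $l(x_i)=j\neq k$, which cannot occur inside $l^{-1}([k,k+r])$ when $r\le\frac{1}{6}$, a sign that the intended content is the multi-level version --- and that generality is what the subsequent corollary actually needs, since metrical properness on connected components concerns preimages of intervals $[x-N,x+N]$ of arbitrary length. Your retraction is cleaner and even yields a homotopy equivalence rather than just a statement about components, but it is hardwired to $r<1$: the stubs stay dangling only because no level-$(k+1)$ vertex is reached, so your method does not extend to the wider intervals used downstream, whereas the paper's projection does.
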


\begin{proof} Any vertex in $l^{-1}([k,k+r])$ is connected by radial edges to a vertex in $l^{-1}(k)$. If $l^{-1}(k)$ is connected, then $l^{-1}([k,k+r])$ is connected.

Suppose two vertices $x,x'\in l^{-1}(k)$ connected by a path in $l^{-1}([k,k+r])$. Then, there is a sequence of vertices $x=x_0,x_1,...,x_n=x'$ in $l^{-1}([k,k+r])$ such that $\{x_{i-1},x_i\}$ is an edge in $l^{-1}([k,k+r])$ for $i=1,n$. For every $x_i$ with $l(x_i)=j$ there is a vertex $y_i=p_k(p_j^{-1}(x_i))\in l^{-1}(k)$. Since $d_X(x_i,x_{i+1})\leq r^k$, then either $y_i=y_{i+1}$ or $\{y_i,y_{i+1}\}$ is an edge in $l^{-1}(k)$. Therefore, $l^{-1}(k)$ is connected.
\end{proof}

\begin{cor} The level function is metrically proper on the connected components if and only if there is a constant $D>0$ such that every connected component of $l^{-1}(k)$ has diameter at most $D$ for any $k\in \bz$.
\end{cor}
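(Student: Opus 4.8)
The plan is to prove the two implications separately, with the forward direction essentially immediate from an inclusion and the reverse direction carrying the real content via a projection argument of the same flavour as the preceding proposition.

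For the implication that metric properness on the connected components forces a uniform diameter bound on the components of $l^{-1}(k)$, I would simply exploit the inclusion $l^{-1}(k)\subseteq l^{-1}([k-N,k+N])$, which holds for every $N>0$ and every $k\in\bz$. Fixing one such $N$ and letting $M$ be the bound provided by the definition, every connected component $K$ of $l^{-1}(k)$ is a connected subset of $l^{-1}([k-N,k+N])$ and is therefore contained in a single connected component of $l^{-1}([k-N,k+N])$, whose diameter is at most $M$. Hence $\mathrm{diam}(K)\leq M$, and the constant $D=M$ works uniformly in $k$. This direction needs nothing about the structure of $\mathcal{RH}(Z)$ beyond the inclusion.

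For the converse I would fix $N>0$, assume every component of $l^{-1}(k)$ has diameter at most $D$ for all $k$, and take a connected component $\mathcal{C}$ of $l^{-1}([x-N,x+N])$ for some $x\in\br$. Let $k=\lceil x-N\rceil$ be the lowest integer level meeting the interval (if the interval contains no integer then $\mathcal{C}$ lies inside a single edge and has diameter less than $1$, so there is nothing to prove). The radial stubs crossing the boundary levels are dead ends, so any two vertices $u,v\in\mathcal{C}$ are joined by an edge-path in $\mathcal{C}$ all of whose vertices have level $\geq k$. For a vertex $u=z^{l(u)}$ I write $\pi_k(u)=z^k=p_k(p_{l(u)}^{-1}(u))$ for its radial projection to level $k$; since $0<r<1$ and $l(u)\geq k$, the chain $z^{l(u)},z^{l(u)-1},\dots,z^k$ is a radial geodesic, so $|u\,\pi_k(u)|=l(u)-k\leq 2N$. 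The key step is that $\pi_k$ sends such an edge-path to a horizontal walk at level $k$: if $[u,u']$ is any horizontal or radial edge of $\mathcal{C}$ with underlying points $z,z'$ and lower level $j\geq k$, then $d_Z(z,z')\leq r^{j}\leq r^{k}$, so $\pi_k(u)$ and $\pi_k(u')$ either coincide or are joined by a horizontal edge at level $k$. This is exactly the projection computation used in the proof of the preceding proposition, now applied across the whole band of levels $[x-N,x+N]$ rather than a single adjacent pair.

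Applying this to a path in $\mathcal{C}$ joining $u$ and $v$ shows that $\pi_k(u)$ and $\pi_k(v)$ lie in the same connected component of $l^{-1}(k)$, whence $|\pi_k(u)\,\pi_k(v)|\leq D$ by hypothesis. The triangle inequality then gives $|uv|\leq |u\,\pi_k(u)|+|\pi_k(u)\,\pi_k(v)|+|\pi_k(v)\,v|\leq 2N+D+2N=D+4N$, and passing from vertices to arbitrary points of $\mathcal{C}$ costs at most an additive constant, so that $M=D+4N+2$ bounds the diameter of every component of $l^{-1}([x-N,x+N])$ independently of $x$. This is precisely metric properness on the connected components. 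I expect the reverse implication to be the main obstacle, and within it the delicate point to be the verification that projection to the lowest level preserves adjacency and increases distances by no more than the level gap $2N$; once that projection lemma is in place, the diameter estimate and the final triangle inequality are routine.
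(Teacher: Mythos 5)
Your proof is correct and takes essentially the same approach as the paper: the forward direction is the immediate inclusion $l^{-1}(k)\subseteq l^{-1}([k-N,k+N])$, and your radial projection $\pi_k(u)=p_k(p_{l(u)}^{-1}(u))$, together with the observation that $d_Z(z,z')\leq r^{j}\leq r^{k}$ preserves adjacency at level $k$, is exactly the projection argument in the paper's preceding proposition, which the paper invokes implicitly for this corollary. You have merely made the quantitative bookkeeping (the $2N$ level gap, the triangle inequality, and the additive constant for non-vertex points) explicit, which the paper leaves to the reader.
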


Note that the level function is trivially bornologous.

\begin{cor} Let $Z$ be a metric space, $X=\mathcal{RH}(Z)$ and $l\co X \to \br$ its level function. If there is a constant $D>0$ such that every connected component of $l^{-1}(k)$ has diameter at most $D$ for any $k\in \bz$, then $Z$ is PQ-symmetric to an ultrametric space. Moreover, $X$ holds all the conditions in \ref{Th: application}.
\end{cor}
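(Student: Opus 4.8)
The plan is to feed the level function $l$ into the criterion of Corollary \ref{Cor: qi2}, conclude that $X=\mathcal{RH}(Z)$ is quasi-isometric to a tree, transport this across the rough isometry between $\mathcal{RH}(Z)$ and $\mathcal{H}(Z)$, and then read off the PQ-symmetric conclusion from Theorem \ref{Th: application}.

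First I would verify that $l$ meets the hypotheses of Corollary \ref{Cor: qi2}. By the remark immediately preceding the statement, $l$ is bornologous. The standing assumption, that every connected component of $l^{-1}(k)$ has diameter at most $D$ for all $k\in\bz$, is precisely the condition shown in the preceding corollary to be equivalent to $l$ being metrically proper on the connected components; hence $l$ enjoys that property as well. On the other side, $X=\mathcal{RH}(Z)$ is $\delta$-hyperbolic by the corollary established above, and it is a complete geodesic space: it is the realization of a graph with unit-length edges, which is geodesic by Lemma \ref{lema: radial}. With a complete Gromov hyperbolic $X$ and a bornologous $l$ that is metrically proper on the connected components, Corollary \ref{Cor: qi2} applies and yields that $X$ is quasi-isometric to a tree.

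Next I would move to the standard hyperbolic approximation. The proposition proved above shows that $\mathcal{RH}(Z)$ and $\mathcal{H}(Z)$ are roughly isometric, hence quasi-isometric, and being quasi-isometric to a tree is a quasi-isometry invariant; therefore $\mathcal{H}(Z)$ is quasi-isometric to a tree too. Applying Theorem \ref{Th: application} with its $X$ taken to be $\mathcal{H}(Z)$, condition (1) now holds, so every listed condition holds, and in particular $Z$ is PQ-symmetric homeomorphic to a bounded complete ultrametric space. For the moreover clause I note that the conditions of Theorem \ref{Th: application}---being quasi-isometric to a tree, the bottleneck property, and $asdim\leq 1$---are all quasi-isometry invariants, so they are inherited by $\mathcal{RH}(Z)$ from $\mathcal{H}(Z)$.

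The step I expect to require genuine care, rather than being purely formal, is the completeness hypothesis: Theorem \ref{Th: application} is stated for complete $Z$, whereas here $Z$ is an arbitrary metric space. The natural remedy is to pass to the completion $\overline{Z}$. For each $k$ the vertex set of $\mathrm{RipsG}_{r^k}(Z)$ is dense in that of $\mathrm{RipsG}_{r^k}(\overline{Z})$, so the two Rips graphs, and then $\mathcal{RH}(Z)$ and $\mathcal{RH}(\overline{Z})$, are quasi-isometric, and the uniform diameter bound on the level sets should survive in $\overline{Z}$. Running the argument above for $\overline{Z}$ gives that $\overline{Z}$, and hence $Z$ by restricting the resulting PQ-symmetric homeomorphism, is PQ-symmetric to an ultrametric space. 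The one point needing checking is that the bound $D$ on the diameters of the components of $l^{-1}(k)$ really transfers to $\overline{Z}$ uniformly in $k$; everything else in this reduction is routine.
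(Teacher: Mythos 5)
Your proposal is correct and follows essentially the same chain the paper intends (the paper states this corollary without proof as an immediate consequence of the preceding results): $l$ is bornologous, the preceding corollary converts the uniform diameter bound into ``metrically proper on the connected components,'' $\mathcal{RH}(Z)$ is a complete geodesic $\delta$-hyperbolic graph so Corollary \ref{Cor: qi2} (equivalently Theorem \ref{Th: qi2}) applies, and the rough isometry with $\mathcal{H}(Z)$ plus Theorem \ref{Th: application} yields the PQ-symmetric conclusion. The completeness wrinkle you flag is real --- the paper states the corollary for arbitrary $Z$ while Theorem \ref{Th: application} assumes $Z$ complete, and the paper silently glosses over this --- but the one step you leave unchecked can be closed in either of two ways. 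First, it can be bypassed entirely: the inclusion $\mathcal{RH}(Z)\to\mathcal{RH}(\overline{Z})$ is a graph morphism with $1$-dense image and is readily seen to be a quasi-isometry, and ``quasi-isometric to a tree'' is a quasi-isometry invariant, so you never need the level function on $\mathcal{RH}(\overline{Z})$ to satisfy any properness hypothesis; you only need $\mathcal{RH}(Z)$ quasi-isometric to a tree, which you already have. Second, if you do want the bound to transfer, it does, but not with the same constant and only because the hypothesis holds at \emph{all} scales: an $r^k$-chain in $\overline{Z}$ perturbs (approximating each point within $r^k$) to a chain in $Z$ with gaps at most $3r^k\leq r^{k-1}$, so its points lie in a single $r^{k-1}$-component of $Z$, giving the $\overline{Z}$-components at scale $r^k$ metric diameter at most $Dr^{k-1}+2r^k$, i.e.\ a uniform bound $D'$ depending only on $D$ and $r$; without invoking the coarser scale this can genuinely fail, since completion can merge distinct $r^k$-components. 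Finally, restricting the PQ-symmetric homeomorphism from $\overline{Z}$ to $Z$ is legitimate for the stated conclusion because a subspace of an ultrametric space is ultrametric (though one then loses ``bounded complete'' in item (3) of Theorem \ref{Th: application}, which is the residue of the completeness hypothesis).
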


\begin{nota} Let $Z$ be a metric space, $X=\mathcal{RH}(Z)$ and $l\co X \to \br$ its level function. The identity induces a bijection between the vertices of any connected component of $l^{-1}(k)$ and the corresponding $r^k$-connected component on $Z$.
\end{nota}


\begin{cor} Let $Z$ be a metric space. If there are constants $D>0$ and $0<r<\frac{1}{6}$ such that every $r^k$-connected component is $D$-finitely $r^k$-connected for any $k\in \bz$, then $Z$ is PQ-symmetric to an ultrametric space.
\end{cor}

\section{Acknowledgments}

The author would like to express his
gratitude to the mathematics department of the University of Illinois at Chicago for their hospitality 
and to Kevin Whyte for his help and support producing this work.

The author was partially supported by MTM-2009-07030.

\end{document}